\theoremstyle{plain}
\newtheorem{THEOREM}{Theorem}[section]
\newtheorem{LEMMA}[THEOREM]{Lemma}
\newtheorem{theorem}[THEOREM]{Theorem}
\newtheorem{lemma}[THEOREM]{Lemma}
\theoremstyle{definition}
\newtheorem{definition}[THEOREM]{Definition}
\theoremstyle{remark}
\newtheorem{remark}[THEOREM]{Remark}
\newtheorem{REMARK}[THEOREM]{Remark}
\newcommand{\thm}[1]{Theorem~\ref{#1}}
\newcommand{\lem}[1]{Lemma~\ref{#1}}
\newcommand{\Z}{\ensuremath{\mathbb{Z}}}   %%% integers
\newcommand{\R}{\ensuremath{\mathbb{R}}}   %%% reals
\newcommand{\T}{\ensuremath{\mathbb{T}}}   %%% torus
\def \a {\alpha}
\def \b {\beta}
\def \d {\delta}
\def \g {\gamma}
\def \e {\epsilon}
\def \f {\varphi}
\def \l {\lambda}
\def \L {\Lambda}
\def \n {\nabla}
\def \s {\sigma}
\def\cprime{$'$}
\def \< {\langle}
\def \> {\rangle}
\def \p {\partial}
\def \ra {\rightarrow}
\def \ss {\subset}
\DeclareMathOperator{\supp}{supp} %
\def \cH {\mathcal{H}}
\def \dom {\R^3}
\begin{document}

\title{Conditions Implying Energy Equality for Weak Solutions of the Navier--Stokes Equations}
\author{Trevor M. Leslie and Roman Shvydkoy}

\email{tlesli2@uic.edu; shvydkoy@uic.edu}

\address{Department of Mathematics, Statistics, and Computer Science \\851 S Morgan St, M/C 249 \\ University of Illinois at Chicago, Chicago, IL 60607}
 
 \thanks{This work was partially supported by NSF grants DMS-1210896 and DMS-1515705. The authors thank V. {\v{S}}ver{\'a}k for valuable discussions and constant interest in the work. We also thank the anonymous referees for numerous helpful comments and suggestions.}

\subjclass[2010]{76S05,35Q35}

\maketitle
\begin{abstract}
When a Leray--Hopf weak solution to the NSE has a singularity set $S$ of dimension $d$ less than~$3$---for example, a suitable weak solution---we find a family of new $L^q L^p$ conditions that guarantee validity of the energy equality. Our conditions surpass the classical Lions--Lady{\v{z}}enskaja $L^4 L^4$ result in the case $d<1$. Additionally, we establish energy equality in certain cases of Type-I blowup.  The results are also extended to the NSE with fractional power of the Laplacian below $1$.
\end{abstract}

\section{Introduction}

Consider the incompressible Navier--Stokes equations
\begin{equation}
\label{e:momentum}
\p_t u + u\cdot \n u - \nu\Delta u = - \n p
\end{equation}
\begin{equation}
\label{e:divfree}
\n \cdot u = 0
\end{equation}
where $u$ is the velocity field, $p$ is the scalar pressure, and $\nu$ is the viscosity.  We restrict attention to the case of the open domain $\R^3$ for definiteness. The results below carry over ad verbatim to $\T^3$ and locally to the interior of a bounded domain as well.

By a classical result of Leray \cite{Leray}, it is known that for divergence-free initial data $u_0\in L^2$, there exists a weak solution to \eqref{e:momentum}--\eqref{e:divfree} up to a specified time $T$ such that $u\in L^2 H^1\cap L^\infty L^2$ and 
\begin{equation}
\label{e:global_ei}
\int_{\R^3\times \{t\}} |u|^2\,dx 
\le  \int_{\R^3\times \{t_0\}} |u|^2\,dx -2\nu \int_{t_0}^t \int_{\R^3} |\n u |^2\,dx\,dt
\end{equation}
for all $t\in (0,T]$ and a.e. $t_0\in [0,t]$ including $t_0 = 0$. 
Moreover, strong solutions to \eqref{e:momentum}--\eqref{e:divfree} satisfy the corresponding energy equality:
\begin{equation}
\label{e:global_ee}
\int_{\R^3\times \{t\}} |u|^2\,dx - \int_{\R^3\times \{0\}} |u|^2\,dx
= -2\nu \int_{t_0}^t \int_{\R^3} |\n u |^2\,dx\,dt.
\end{equation}
Since the introduction of Leray--Hopf solutions, it has been notoriously difficult to establish energy equality for all such solutions. The question, beyond purely mathematical interest, is motivated on physical grounds as well: Knowing \eqref{e:global_ee} rather than \eqref{e:global_ei} rules out the presence of anomalous energy dissipation due to the nonlinearity, a phenomenon normally associated with weak solutions of the inviscid Euler system in the framework of the so-called Onsager conjecture \cite{Onsager} (more on this below).  This allows, as stipulated, for example, in the text of Frisch \cite{Frisch}, to precisely equate the classical Kolmogorov residual energy anomaly $\e_\nu \ra \e_0$ of a turbulent flow to the Onsager dissipation in the limit of vanishing viscosity.

Let us give a brief overview of what has been done so far in the direction of resolving the question of energy equality.   Lions proved \cite{Lions} that \eqref{e:global_ee} holds for $u\in L^4 L^4$; techniques developed in the classical book of Lady{\v{z}}enskaja, Solonnikov, and Ural{\cprime}ceva \cite{Ladyzhenskaya} reproduce this result. Later, Serrin \cite{Serrin} proved energy equality in space dimension $n$ under the condition $ \frac np + \frac2q \le 1$.  Shinbrot \cite{Shinbrot} improved upon this result, proving equality when $\frac 2p + \frac2q \le 1$, $ p\ge 4$, independent of the dimension.  Kukavica \cite{Kukavica} has proven equality under the assumption $p\in L^2 L^2$; this assumption is weaker than---but dimensionally equivalent to---Lions's result.  A number of new conditions have appeared more recently after the introduction of critical conditions for the parallel question of energy conservation for the Euler system (cf. \cite{CET}, \cite{DR}, \cite{CCFS}). In \cite{CCFS}, energy equality is shown to hold for both the Euler and the Navier--Stokes systems for all solutions in the Besov-type regularity class
\def \reg {\mathcal{R}}
\begin{equation}\label{e:Besov}
\reg_0 = \left\{ u \in  L^3_t L^3_x : \lim_{|y|\ra 0 } \frac{1}{|y|} \int_{\R^n\times[0,T]}|u(x+y,t) - u(x,t)|^3\, dx \, dt  = 0 \right\}.
\end{equation}
Note that this class measures regularity $1/3$ in space, ``$L^3$-averaged" over space-time. In particular, the condition defining $\reg_0$ holds if  $u\in L^3 B^{1/3}_{3,p}$ for some $p\in [1,\infty)$; the class includes spaces like $L^3W^{1/3, 3}$ and $L^3 H^{5/6}$. On a bounded domain, the energy equality is established in \cite{CFS} for the dimensionally equivalent class $L^3 D(A^{5/12})$, where $A$ is the Stokes operator; see also \cite{FT} for extension to exterior domains. Let us note that by interpolation with the enstrophy class $L^2 H^1$, any solution in $L^4 L^4$ lands in $L^3 B^{1/3}_{3,3} \ss \reg_0$. Thus, Lions's condition can be recovered from Onsager's.  

It was not until after most of the results above had been proven that arguments establishing \eqref{e:global_ee} began to make use of the fact that the set of singular points of a weak solution  may be confined to a lower-dimensional subset of time-space. This is of course the case for suitable weak solutions, according to the Caffarelli--Kohn--Nirenberg (CKN) theorem \cite{CKN}. In \cite{SS-pressure}, the authors examine the situation where $u$ is bounded in an energy class which is scaling invariant in space, and the energy equality is established by covering the singularity set in accordance with the CKN theorem. Presently, we can address these cases in a systematic way with the use of the class $\reg_0$. Indeed, any condition on the solution $u$ which is spatially both shift-invariant and scale-invariant implies that $u$ belongs to $L^\infty B^{-1}_{\infty,\infty}$, the largest such class by Cannone's theorem \cite{Cannone}. By interpolation with $L^2 H^1 = L^2 B^{1}_{2,2}$, we find again that $u \in L^3 B^{1/3}_{3,3} \ss \reg_0$, and \eqref{e:global_ee} follows (see Section~\ref{ss:typeI}). A cutoff procedure was also previously used in \cite{ShvydkoyGeometric} to establish energy equality; there it was assumed that the singularity was confined to a curve $s \in C^{1/2}([0,T]; \R^3)$ and additionally that $u\in L^3 L^{9/2}$, $\n u \in L^3 L^{9/5}((0,T)\times \R^3\backslash\mathrm{Graph}(s))_{\mathrm{loc}}$, the assumption dimensionally equivalent to the class $\reg_0$.  

In this paper, we establish new sufficient conditions for energy equality which specifically exploit low dimensionality of the singularity set. We consider both classical and fractional dissipation cases. The results are sorted into two categories: the more special case where \eqref{e:global_ee} is established on a time interval of regularity until the first time of blowup, and the more general case of singularities spread over space-time. In the former case the results are stronger. Although it is more restrictive in terms of setup, it is also the case that is most relevant for the blowup problem. The conditions we find depend on the dimension $d<3$ of the singularity set, which is defined precisely below. The bifurcation value of $d$ occurs at $d=1$, or $d=5-4\g$ in the fractional case, where $\g$ is the power of the Laplacian. Recall that if $u$ is a suitable solution of the classical NSE, then by CKN we have $d\le 1$, so the low dimensionality comes as given in this case.  

We state our main result in terms of suitable solutions to the classical Navier--Stokes equation, as it appears to be the most addressed case in the literature. However, this result is a special case of a much more general set of criteria depending on values of $d$ and $\g \leq 1$, which we will state in detail in the sections below. To illustrate our results, we make extensive use of diagrams, drawn in $(x = 1/p,\,y=1/q)$ coordinates.  The striped regions in our figures correspond to new values of $p$ and $q$ for which the condition $u\in L^q L^p$ implies energy equality.  A dotted boundary indicates that values on the boundary are not included, while a solid line indicates included values.

\begin{theorem} \label{t:main}
Suppose $u\in C_w([0,T];L^2) \cap L^2([0,T]; H^1)$ is a suitable weak solution on $[0,T]$, regular on $[0,T)$. Assume that $u \in L^q L^p$, where one of the following conditions holds (see Figure~\ref{fig:d1}):
\begin{align}
\frac{2}{p} + \frac{2}{q} & \le 1, \;\; 3 \leq q \le p   \label{t:opt1} \\
\frac{2}{p} + \frac{2}{q} &< 1, \;\; 3 \leq p < q \label{t:opt2}\\
\frac{7}{p}-\frac{6}{p^2} + \frac{2}{q} & < 2, \;\; p<3.
\end{align}
Then $u$ satisfies \eqref{e:global_ee} on the interval $[0,T]$. 
\end{theorem}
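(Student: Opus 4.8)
The plan is to run a cutoff argument anchored to the Caffarelli--Kohn--Nirenberg theory. First I would reduce to the behavior at the terminal time: since $u$ is regular on $[0,T)$, the classical energy equality holds on every $[0,t]$ with $t<T$, and letting $t\uparrow T$ (using $\n u\in L^2_{t,x}$) shows that $\lim_{t\uparrow T}\|u(t)\|_{L^2}^2=E:=\|u(0)\|_{L^2}^2-2\nu\int_0^T\|\n u\|_{L^2}^2\,dt$ exists; by \eqref{e:global_ei} and weak continuity, $\|u(T)\|_{L^2}^2\le E$, so the whole theorem amounts to the reverse bound $\|u(T)\|_{L^2}^2\ge E$. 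Let $S\ss\R^3$ be the spatial singular set of $u$ at time $T$. By CKN it has zero one-dimensional parabolic Hausdorff measure, hence Hausdorff dimension $\le 1$; this dimension is the parameter $d$ in the statement. By $\e$-regularity and $u\in L^\infty L^2$, $S$ is compact.

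Next I would cover and cut off. Given $\e>0$, choose $d'>d$ and a finite cover of $S$ by balls $B(x_i,r_i)$ with $r_i\le\e$, $\sum_i r_i^{d'}\le\e$, and $\{B(x_i,2r_i)\}$ pairwise disjoint. Let $\chi=\chi_\e\in C^\infty_c(\R^3)$, $0\le\chi\le 1$, with $\chi\equiv 0$ on $\bigcup_i B(x_i,r_i)$, $\chi\equiv 1$ off $\bigcup_i B(x_i,2r_i)$, and $|\n\chi|\lesssim r_i^{-1}$, $|\n^2\chi|\lesssim r_i^{-2}$ on $A_i:=B(x_i,2r_i)\setminus B(x_i,r_i)$ (a fixed large-scale cutoff is also inserted; its contributions vanish as its scale $\to\infty$ thanks to $u\in L^2$ and $p\in L^{5/3}$, and I suppress it). Since $\supp\chi_\e$ misses the closed set $S$, every $(x,T)$ with $x\in\supp\chi_\e$ is a regular point, so $u$ is smooth on a space--time neighborhood of it; hence I may test \eqref{e:momentum} with $u\chi_\e^2$ and integrate by parts \emph{up to and including} $t=T$, obtaining (via $\n\cdot u=0$)
\begin{equation*}
\tfrac12\!\int|u(T)|^2\chi_\e^2-\tfrac12\!\int|u(0)|^2\chi_\e^2+\nu\!\int_0^T\!\!\!\int|\n u|^2\chi_\e^2 =\tfrac\nu2\!\int_0^T\!\!\!\int|u|^2\Delta(\chi_\e^2)+\tfrac12\!\int_0^T\!\!\!\int|u|^2(u\cdot\n\chi_\e^2)+\!\int_0^T\!\!\!\int p\,(u\cdot\n\chi_\e^2).
\end{equation*}
Letting $\e\to0$, since $\bigl|\bigcup_i B(x_i,2r_i)\bigr|\lesssim\sum_i r_i^3\to 0$ the cutoff $\chi_\e\to1$ in measure, so the left side converges to $\tfrac12\|u(T)\|_{L^2}^2-\tfrac12 E$; it then suffices to show each of the three error terms on the right tends to $0$.

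Finally, the estimates. Each error term is $\sum_i$ of an integral over $A_i\times[0,T]$ weighted by $r_i^{-2}$ (the Hessian term) or $r_i^{-1}$ (the two nonlinear terms). I would bound these by H\"older in space on $B(x_i,2r_i)$ --- producing volume gains $r_i^{3(1-2/p)}$ or $r_i^{3(1-3/p)}$, the latter also for the pressure after the Calder\'on--Zygmund bound $p\in L^{q/2}L^{p/2}$ --- and by H\"older in time against $\|u\|_{L^qL^p}$, $\|u\|_{L^\infty L^2}$ and $\|u\|_{L^2L^6}$ (Sobolev from $\n u\in L^2$), together with, for $p<3$, a Gagliardo--Nirenberg splitting of $\|u\|_{L^3(B(x_i,2r_i))}$ between $L^p$ and $\|\n u\|_{L^2(B(x_i,2r_i))}$ and the quantitative CKN decay $|u(\cdot,t)|\lesssim r_i^{-1}$ on $A_i$. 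Each term then reduces to a sum $\sum_i r_i^{\a}$ (possibly multiplied by a quantity such as $\|\n u\|_{L^2(\bigcup_i A_i\times[0,T])}$, which already $\to0$), which stays bounded --- and hence the term vanishes --- exactly when $\a\ge d$; after optimizing the interpolation and GN exponents, the requirement that \emph{all} error terms close at once is precisely the union of \eqref{t:opt1}, \eqref{t:opt2} and the third displayed inequality, the bifurcation at $d=1$ arising from the competition between these exponents. (For $p\ge4$ one can shortcut: interpolating $L^qL^p$ with $L^2H^1$ places $u$ in $\reg_0$ and \eqref{e:global_ee} follows from \cite{CCFS}; the genuinely new cases are $3\le p<4$ with $d<1$ and, above all, $p<3$.) I expect the main obstacle to be exactly this last bookkeeping: in the range $p<3$ the naive estimates of the pressure term and of the Hessian-of-cutoff term diverge, and closing them forces one to use the full three-space interpolation jointly with the CKN decay rate near $S$ --- the quadratic term $6/p^2$ in the third condition is the signature of that two-parameter optimization.
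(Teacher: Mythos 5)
There is a genuine gap, and it is the central idea of the paper that is missing: the cutoff must be localized in \emph{time} as well as in space, with a freely chosen anisotropic scaling. Your $\chi_\e$ depends only on $x$, so every error term is integrated over $A_i\times[0,T]$ and the time variable contributes no smallness beyond a fixed H\"older factor; the whole $q$-dependence of the hypotheses then essentially disappears. Concretely, for the cubic flux term your scheme gives $\int_0^T\!\int_{A_i}|u|^3 r_i^{-1}\lesssim \|u\|_{L^qL^p}^3\,T^{1-3/q}\,r_i^{\,2-9/p}$, and summability against $\sum_i r_i^{d}$ with $d=1$ forces $p\ge 9$; the Hessian term is even worse ($1-6/p\ge d$). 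These are far from \eqref{t:opt1}--\eqref{t:opt2}, so no amount of ``bookkeeping'' of H\"older/Gagliardo--Nirenberg exponents can recover them from a purely spatial cutoff. The paper instead exploits the fact that the singular set lies in the single time slice $\{T\}$ by covering it with cylinders $B_{r_i}(x_i)\times(-r_i^{\a},r_i^{\a})$ whose time length $r_i^{\a}$ carries a free exponent $\a$, building $\phi=1-\sup_i\phi_i$ (time-dependent, which also produces a $\p_t\phi$ term you do not have), proving a summation lemma (Lemma~\ref{l:conv}) for $\sum_i r_i^{-\s}\chi_{I_i}(t)$, and then \emph{optimizing over $\a$}: at $d=1$ the optimal $\a=2$ gives the lines \eqref{t:opt1}--\eqref{t:opt2}, and letting $\a$ range over $[2,4]$ while intersecting the $C$-constraint with the enstrophy-interpolated $D+2P$-constraint traces exactly the parabola $\frac7p-\frac6{p^2}+\frac2q=2$. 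The quadratic term is the signature of eliminating $\a$, not of a CKN decay rate.

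A second, independent flaw: you invoke a ``quantitative CKN decay $|u(\cdot,t)|\lesssim r_i^{-1}$ on $A_i$.'' CKN gives no such uniform bound near the singular set; $\e$-regularity yields local boundedness at regular points with constants that are not controlled by $\dist(x,S)^{-1}$. A bound of that form is precisely the Type-I-in-space hypothesis, which is a separate assumption in Theorem~\ref{t:typeI} and by itself already implies energy equality (via $L^\infty L^{3,\infty}$ and interpolation), so using it here is both unjustified and would render the $p<3$ case circular. Minor further points: a cover of $S$ with $\{B(x_i,2r_i)\}$ pairwise disjoint is generally unavailable (the paper avoids this by taking $\phi=1-\sup_i\phi_i$), and the justification for testing with $u\chi^2$ up to $t=T$ is handled in the paper through the local energy equality of \cite{Shvydsing} for $W^{1,\infty}$ test functions supported off the Onsager singular set. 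Your initial reduction (energy equality on $[0,T]$ is equivalent to $\|u(T)\|_{L^2}^2\ge E$) is fine, but the estimates that follow cannot close under the stated conditions without the space-time cylinder construction and the optimization in $\a$.
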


Theorem \ref{t:main} will be proven in Section~\ref{s:1-slice} as part of a more general result for dimensions $d<3$ on an interval of regularity. The results are summarized in Figures~\ref{fig:dzero}, \ref{fig:done}, \ref{fig:d1}, \ref{fig:d13}.  We can also treat the situation where $u$ has one of the following Type-I blowups at $T$: 
\begin{equation}\label{e:typeI}
\sup_x |u(x,t)| \leq \frac{C}{\sqrt{T-t}} \text{ or } \sup_{0<t<T} |u(x,t)| \leq \frac{C}{|x|}.
\end{equation}
We call these two scenarios ``Type-I in time'' blowup and ``Type-I in space'' blowup, respectively.

\begin{theorem}\label{t:typeI}
Suppose $u$ is a Leray--Hopf solution on $[0,T]$ which is regular on $[0,T)$.  If $u$ experiences Type-I in space blowup as in \eqref{e:typeI}, then $u$ satisfies \eqref{e:global_ee} on $[0,T]$.  If $u$ experiences Type-I in time blowup as in \eqref{e:typeI} and additionally $d<1$, where $d$ denotes the Hausdorff dimension of the singularity set at time $T$, then $u$ satisfies \eqref{e:global_ee} on $[0,T]$.
\end{theorem}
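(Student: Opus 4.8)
The plan is to obtain both statements as consequences of the general dimension-$d$ criterion on an interval of regularity proved in Section~\ref{s:1-slice}: in each Type-I scenario I would check that the solution automatically belongs to an $L^qL^p$ class whose exponents fall in the good region attached to the relevant value of $d$. Since $u$ is smooth on $[0,T)$ it satisfies \eqref{e:global_ee} on every $[0,T-\eta]$, and because $\n u\in L^2_{t,x}$ the dissipation integral is continuous up to $T$; so everything reduces to controlling the energy balance on the shrinking window $[T-\eta,T]$, which is exactly what the Section~\ref{s:1-slice} result does once the $L^qL^p$ input is supplied.

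For the Type-I-in-space case I would first observe that the singular set at time $T$ is contained in $\{0\}$, so that $d=0$: for $x\neq0$ the bound $\sup_{0<t<T}|u(x',t)|\le C|x'|^{-1}$ makes $u$ bounded on a space-time neighborhood of $(x,T)$, hence regular there by Serrin's $\epsilon$-regularity criterion, while $u$ is regular on $[0,T)$ by hypothesis. The same pointwise bound gives $u\in L^\infty\bigl([0,T];L^p(\R^3)\bigr)$ for every $p\in(2,3)$: near the origin $\int_{|x|\le1}|u|^p\lesssim\int_{|x|\le1}|x|^{-p}\,dx<\infty$ since $p<3$, and away from the origin $u(\cdot,t)\in L^2\cap L^\infty$ with norms bounded uniformly in $t$. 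A pair $(p,q)=(p,\infty)$ with $p\in(2,3)$ satisfies $\tfrac7p-\tfrac6{p^2}+\tfrac2q<2$, hence lies in the good region for $d=0$ (see Figure~\ref{fig:dzero}), and the Section~\ref{s:1-slice} criterion yields \eqref{e:global_ee} on $[0,T]$.

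For the Type-I-in-time case I would interpolate the uniform energy bound $u\in L^\infty([0,T];L^2)$ against $\|u(t)\|_{L^\infty}\le C(T-t)^{-1/2}$ from \eqref{e:typeI}: for $p\in[2,\infty]$,
\[
\|u(t)\|_{L^p}\le\|u(t)\|_{L^2}^{2/p}\,\|u(t)\|_{L^\infty}^{1-2/p}\lesssim(T-t)^{-\frac12\bigl(1-\frac2p\bigr)},
\]
so (on $[0,T-\eta]$ the solution is bounded in $L^p$, so this governs the full norm) $u\in L^q\bigl([0,T];L^p(\R^3)\bigr)$ whenever $q\bigl(1-\tfrac2p\bigr)<2$, i.e., in the $(x,y)=(1/p,1/q)$ coordinates of our figures, whenever $\tfrac1p+\tfrac1q>\tfrac12$ with $\tfrac1p\le\tfrac12$. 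Here the assumption $d<1$ enters: the good region for dimension $d$ produced in Section~\ref{s:1-slice} crosses the line $\tfrac1p+\tfrac1q=\tfrac12$ precisely when $d<1$ (for $d\ge1$ it lies, in the range $\tfrac1p\le\tfrac12$, on or below that line, so the two exponent ranges only touch) — this is why the bifurcation value is $d=1$. Choosing any admissible pair $(p,q)$ in the overlap and invoking the general criterion gives \eqref{e:global_ee} on $[0,T]$.

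The real content is the dimension-$d$ result of Section~\ref{s:1-slice}; granting it, the points needing care here are minor: the endpoint $q=\infty$ in the Type-I-in-space case, harmless because $u\in C_w([0,T];L^2)$ and the relevant inequality is strict in $p$ (and that case also admits a short direct proof, cutting a shrinking parabolic cylinder around $(0,T)$: since $|u|^{-1}$-type bounds put $u$ in weak $L^3$ in space, $\int_{\delta\le|x|\le2\delta}|u|^3\,dx$ is bounded independently of $\delta$, so the flux through the cylinder, integrated over a time window of length $o(1)$, tends to $0$); and confirming that for $d<1$ the good region genuinely pokes across $\tfrac1p+\tfrac1q=\tfrac12$ so that the interpolation range is usable. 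The main obstacle is not in these reductions but inside Section~\ref{s:1-slice}: the covering/flux estimate in which $\dim_H\mathcal{S}=d$ is used to bound the size of a parabolic space-time neighborhood of $\mathcal{S}\times\{T\}$, and hence the energy flux through its lateral boundary, by a quantity that vanishes as the cover is refined.
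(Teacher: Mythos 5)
Your Type-I-in-time argument is essentially the paper's own: the paper notes that the first bound in \eqref{e:typeI} gives $u\in L^rL^\infty$ for every $r<2$, and that when $d<1$ one may choose $r\in[\tfrac{5-d}{3-d},2)$ so that $(p,q)=(\infty,r)$ satisfies \eqref{opt1}; your interpolation against $L^\infty L^2$ and the observation that the region \eqref{opt1} crosses the line $\tfrac1p+\tfrac1q=\tfrac12$ exactly when $d<1$ is the same computation in slightly different clothing (and both versions rely on the paper's remark that pairs with $q<3$ or with $p=\infty$ are reached by interpolating with the Leray--Hopf line, since the $D+2P$ estimate is proved directly only for $p,q\ge3$). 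For Type-I in space, however, you take a genuinely different route: the paper never invokes the singular set at all --- it observes $u\in L^\infty L^{3,\infty}$ and interpolates with $L^2L^6$ to land in the Lions class $L^4L^4$ (alternatively, Cannone's theorem plus interpolation with $L^2H^1$ lands in the Onsager class $\mathcal{R}_0$), so that part needs neither the regularity-on-$[0,T)$ hypothesis, nor a value of $d$, nor any covering construction, and it even tolerates multiple blowup times. Your version instead localizes the time-$T$ singularity to the origin via Serrin-type local regularity (legitimate, and consistent with the paper's remark that $\Sigma_{ons}\subset\Sigma_{CKN}$), concludes $d=0$, and feeds $u\in L^\infty L^p$, $2<p<3$, into the Section~\ref{s:1-slice} machinery. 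This works, with two small corrections: the condition $\tfrac7p-\tfrac6{p^2}+\tfrac2q<2$ is the $d=1$ curve, not the $d=0$ one (harmless, since the $d=1$ region sits inside the $d=0$ region bounded by \eqref{xy_curve}); and the endpoint $q=\infty$ is not justified by ``strictness in $p$'' but, as in the paper, by interpolating with the Leray--Hopf line to move strictly inside the finite-exponent region. The trade-off is clear: the paper's space argument is two lines and more general, while yours shows that the space case also falls out of the dimension-$d$ machinery once $d=0$ is established.
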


General singularity sets which are spread out in space-time will be addressed in Section~\ref{s:gen} for the classical NSE; the results are depicted in Figure~\ref{fig:0<d<1_gen} for $0<d<1$. At $d=1$, the new region collapses to the known classical diagram; see Figure~\ref{fig:d=1_gen}. We give extensions for the fractional dissipation case in Section~\ref{s:frac} and present similar figures for each significantly distinct range of values $d,\g$ pertaining to the time-slice singularity case. On the way, we prove a commutator estimate in Lemma~\ref{p:commut} which may be of independent interest.

\section{Setup} 
As our first order of business, we make precise the notion of the regular and singular sets under consideration in our analysis. We follow the setup of \cite{Shvydsing}.  First, we define two regularity classes of vector fields, reminiscent of \eqref{e:Besov}.  For a subinterval $I\subset [0,T]$, we denote
\begin{equation}
\reg(\R^n \times I) 
= \left\{ u \in  L^3_t L^3_x : \lim_{|y|\ra 0 } \frac{1}{|y|} \int_{\R^n\times I}|u(x+y,t) - u(x,t)|^3 \, dx \, dt  = 0 \right\}.
\end{equation}
We also define a local version of this class, denoting by $\reg(U\times I)$ the class of vector fields $u$ such that $u\phi\in \reg(\R^n\times I)$ for all $\phi\in C_0^\infty(U)$, where $U$ is any open set in $\R^n$. 

\begin{definition}
Let $u$ be a Leray--Hopf weak solution to the classical Navier--Stokes equations on $\R^3\times [0,T]$.  We say that a point $(x_0,t_0)$ is an \emph{Onsager regular point} if $u\in \reg(U\times I)$ for some open set $U\subset \R^3$ and relatively open interval $I\subset [0,T]$ such that $(x_0, t_0)\in U\times I$.  We say that $(x_0, t_0)$ is an \emph{Onsager singular point} if it is not a regular point; further, we denote the (closed) set of all Onsager singular points by $\Sigma_{ons}$ and refer to it as the \emph{Onsager singular set}.  The complement of $\Sigma_{ons}$ in $\R^3\times[0,T]$ is called the \emph{Onsager regular set} of $u$.
\end{definition}

\begin{REMARK}
The CKN theorem implicates a different type of singularity set which we denote $\Sigma_{CKN}$.  This set can be defined as the complement in $\R^3\times [0,T]$ of 
\begin{equation}
\reg_{CKN} = \{(x_0,t_0)\in \R^3\times [0,T] : \exists \text{ nbd. } D\subset \R^3\times [0,T] \text{ of } (x_0,t_0) \text{ s.t. } u\in L^\infty(D) \},
\end{equation}
i.e., $\Sigma_{CKN} = (\R^3\times [0,T])\backslash\reg_{CKN}$.  Clearly $\Sigma_{ons}\subset \Sigma_{CKN}$ so that in particular the bounds on the size of $\Sigma_{CKN}$ from the CKN theorem apply a fortiori to $\Sigma_{ons}$.  
\end{REMARK}

Our next item is to introduce a local energy equality which is fundamental to our work.  Suppose $(u,p)$ is a Leray--Hopf weak solution to the Navier--Stokes system on $\dom \times [0,T]$, and consider the following local energy equality for $0\le s<t\le T$:
\begin{multline}\label{localee}
\int_{\dom} |u(t)|^2 \phi - \int_{\dom} |u(s)|^2 \phi - \int_{\dom \times (s,t)} |u|^2 \p_t \phi \\= \int_{\dom \times (s,t)} |u|^2  u \cdot \n \phi + 2 \int_{\dom \times (s,t)} p\ u \cdot \n \phi - 2\nu \int_{\dom \times (s,t)} |\n u|^2 \phi - 2 \nu \int_{\dom \times (s,t)} u \otimes \n \phi : \n u.
\end{multline}
The main idea of the present work is to construct a sequence of test functions which satisfy this equality and to show that when we pass to the limit, the local energy equality reduces to \eqref{e:global_ee}.  It is shown in \cite{Shvydsing} that \eqref{localee} is valid for all $\phi\in C_0^\infty((\R^3\times [0,T])\backslash \Sigma_{ons})$ in the case of the Euler equations ($\nu = 0$).  Straightforward modifications of the proof in \cite{Shvydsing} show that \eqref{localee} is also valid when $\nu>0$.  In fact, an approximation argument shows that \eqref{localee} remains valid for functions $\phi$ (supported outside $\Sigma_{ons}$, as before) which belong only to $W^{1,\infty}$ rather than $C^\infty$. 

Recall that Leray--Hopf solutions satisfy $u(t)\to u(0)$ strongly in $L^2(\R^3)$ as $t\to 0^+$.  Therefore, in order to establish \eqref{e:global_ee}, it suffices to prove energy balance on the time interval $[s,T]$ for each $s\in (0,T)$; the (Onsager) singularity set at the initial time is irrelevant for our analysis. Therefore, we introduce the following singularity set, which we call the \emph{postinitial singularity set} $S$ (or simply the \emph{singularity set} when it will cause no confusion), defined by 
\[
S = \Sigma_{ons}\backslash(\R^3\times \{0\}).
\]
Working with $S$ rather than all of $\Sigma_{ons}$ allows us to obtain better conditions guaranteeing energy balance for solutions which have arbitrary divergence free initial condition $u_0\in L^2$ (but which have small postinitial singularity sets).  A priori, this replacement requires us to assume $s>0$ rather than $s\ge 0$ in \eqref{localee}.  However, as pointed out above, we may extend to $s=0$ by continuity, so that we may consider $S$ instead of $\Sigma_{ons}$ at no real cost. We will make the standing assumption that the Lebesgue measure $|S|$ of $S$ in $\R^3\times [0,T]$ is equal to zero. 

Let us label each of the terms in \eqref{localee} (in the same order as before) and rewrite the equation as
\begin{equation}
A - B -C = D +2 P - 2\nu E - 2 \nu F.
\end{equation}

Having established the above considerations and notation, we can now describe the main idea more clearly and succinctly.  Given a Leray--Hopf solution $u$ and its (postinitial) singularity set $S$, we seek a sequence $\{\phi_\d\}_{\d>0}$ of test functions such that 
\begin{itemize}
	\item $\supp \phi_\d \subset (\R^3\times[0,T])\backslash S$ and $\phi_\d\in W^{1,\infty}(\R^3\times [0,T])$ (so \eqref{localee} is valid for all $0<s<t \le T$);
	\item $0\le \phi_\d\le 1$ and $\phi_\d\to 1$ pointwise a.e. as $\d\to 0$ (which is possible since $|S|=0$), guaranteeing the convergence of the terms $A$, $B$, and $E$ to their natural limits
	\[
	\int_{\dom} |u(t)|^2, \int_{\dom} |u(s)|^2, \int_{\dom \times (s,t)} |\n u|^2,
	\] 
	respectively.  These convergences follow from the fact that $u\in L^\infty L^2\cap L^2 H^1$, together with the dominated convergence theorem.
\end{itemize}
When $A$, $B$, and $E$ tend to their natural limits, we see that in order to establish energy balance on $[s,T]$, it suffices to prove that the other terms $C$, $D+2P$, and $F$ vanish as $\d\to 0$.  In order to ensure this, we make integrability assumptions on the solution $u$, i.e., $u\in L^q([0,T],L^p(\R^3))$ for some pair $(p,q)$ of integrability exponents.  The set of admissible values for $p$ and $q$, which will make the terms $C$, $D+2P$ and $F$ vanish, depend on the integrability properties of the functions $\phi_\d$, which in turn depend on the size and structure of $S$.  Therefore, we continue our discussion in the sections below, where we restrict attention to certain kinds of singularity sets $S$.  Note that in the discussion below, we generally suppress the notation $\d$ from the subscript of our sequence of test functions.

\section{Energy equality at the first time of blowup}\label{s:1-slice}

The case addressed in this section pertains to the situation when singularity $S$ occurs only at the critical time $T$. For notational convenience, we will replace the interval $[0,T]$ with $[-1,0]$, $0$ being critical, and thus assume that $S\subset \R^3\times \{0\}$.

\subsection{Construction of the test function}
We assume that $S$ has Hausdorff dimension $d<3$. (Recall that if $(u,p)$ is a suitable solution, then by CKN, we have $d\leq 1$.) For convenience, we will identify $S$ with its spatial slice at time $t=0$. We denote by $\cH_d(S)$ the $d$-dimensional Hausdorff measure of $S$ and assume that $\cH_d(S)<\infty$. In  what follows below, we take advantage of the fact that $S$ belongs only to the time-slice at $t=0$ and that we can therefore cover $S$ with cylinders scaled arbitrarily in time. Specifically, let us denote by $B_r(x)$ the open ball $\{y\in \R^3: |y-x| < r\}$. Choose $\d\in (0,1)$, then choose finitely many $x_i \in \R^3$, $r_i\in (0,\d)$ for all $i$, such that $S\subset \bigcup_i B_{r_i}(x_i)$ and $\sum_{i=1}^\infty r^d_i \le \cH_d(S)+1$.  Denote $I_i = (-2r_i^{\a},\,2r_i^{\a})$ (where $\a$ is determined below); let $Q_i$ denote the cylinder $Q_i = B_{r_i}(x_i)\times (-r_i^{\a},\,r_i^{\a})$, and put $Q = \bigcup_i Q_i$, $I = \bigcup_i I_i$.  Let $\psi(s)$ be the usual (symmetric, radially decreasing) cutoff function on the line with $\psi(s) = 1$  on $|s|<1.1$ and $\psi(s)$ vanishing on $|s|>1.9$. Let $\phi_i(x,t) = \psi(|x-x_i|/r_i) \psi(t/r_i^{\a})$. Define $\phi = 1 - \sup_i \phi_i$. Clearly, $\phi$ vanishes on an open neighborhood of $Q$, while any partial derivative $\p \phi$ is supported within the union of the double-dilated cylinders, which we denote by $Q^*$.  Note that the Lebesgue measure of the sequence of $Q^*$'s vanishes as $\d \ra 0$; the same is true of the measure of the sequence of $I$'s. Also note that $\phi$ is differentiable a.e.\ and 
\[
|\p \phi(x,t) | \leq \sup_i |\p \phi_i(x,t)| \, \text{ a.e.; see \cite[Theorem 4.13]{Evans}}.
\]
Therefore, for any $a>0$, we have the following bounds, which hold for a.e. $t$:
\begin{subequations} \label{e:phi}
	\begin{align}
	\int_{\dom} |\p_t \phi(x,t)|^a\,dx & \leq \sum_i \int_{\dom} |\p_t \phi_i (x,t)|^a\,dx \leq \sum_i r_i^{-\a  a + 3}\chi_{I_i}(t) \label{e:phit} \\
	\int_{\dom} |\n_x \phi(x,t)|^a\,dx &\leq \sum_i \int_{\dom} |\n_x \phi_i(x,t)|^a\,dx \leq \sum_i r_i^{-a+3}\chi_{I_i}(t). \label{e:phix}
	\end{align}
\end{subequations}

\subsection{Type-I singularities}\label{ss:typeI}
Generally we say that a solution $u$ of the classical Navier--Stokes equations experiences a Type-I blowup at time $0$ if it stays bounded in some scale-invariant functional space:
\[
\| u\|_{X([-1,0]; Y)} \leq C.
\]
Examples include those stated in \eqref{e:typeI}. It also occurs naturally in the case of a self-similar blowup with critical decay of the profile at infinity,
\[
u(x,t) = |t|^{-1/2} U(x / |t|^{1/2}),\quad |U(y)| \leq C / |y|,  \text{ as } |y| \to \infty.
\]
In this case, $u$ clearly belongs to the Lions space $L^4 L^4$ and therefore satisfies the energy equality.  A more subtle situation occurs in the case of Type-I in space only or Type-I in time only blowup, which is addressed in our \thm{t:typeI}.  By Type-I in space, we mean a weak solution on a time interval $[-1,0]$ with the bound given by the second inequality in \eqref{e:typeI} (technically, in this case,  multiple blowups are possible on the interval). 

Now, any solution $u$ on the time interval $[-1,0]$ which experiences Type-I in space blowup belongs to the class $L^\infty(-1,0,L^{3,\infty}(\R^3))$.  It can be seen in (at least) two different ways that solutions in this class necessarily satisfy the energy balance relation.  First, we see that $L^2 L^6 \cap L^\infty L^{3,\infty}\subset L^4 L^4$, simply by interpolation, so that the Lions criterion can be used. Alternatively, we can apply Cannone's theorem \cite{Cannone} to the space $L^{3,\infty}$, which is invariant with respect to both shifts $f\mapsto f(\cdot - x_0)$ and rescalings of the form $f \mapsto \l f(\l \cdot)$, allowing us to conclude that $L^{3,\infty}$ embeds in the largest space with these properties, namely, $B^{-1}_{\infty, \infty}$. By interpolation with $L^2 H^1 = L^2 B^{1}_{2,2}$, we naturally find $u \in L^3 B^{1/3}_{3,3} \ss \reg_0$, which implies energy equality as mentioned in the introduction. This settles the first part of \thm{t:typeI}.  

By Type-I in time, we mean a regular solution $u$ on time interval $[-1,0)$ that experiences blowup at time $t=0$ and satisfies the first inequality in \eqref{e:typeI}. If $u$ is a Type-I in time solution, then $u\in L^r L^\infty$ for any $r<2$.  If additionally we have that $0\le d<1$, then we can choose $r<2$ large enough so that the pair $(p,q) = (\infty, r)$ satisfies \eqref{opt1} below.  We will see that this is a sufficient condition to guarantee \eqref{e:global_ee}. This resolves the second claim in \thm{t:typeI}.

\subsection{Vanishing of the terms $C,D,P,F$ in the one-time singularity case.} We now turn to the proof of \thm{t:main}, which encompasses the next two subsections.  Actually, we will address the time-slice singularity case whenever $S$ has Hausdorff dimension $d<3$, giving a range of $L^q L^p$ conditions for which energy equality holds.  Of course, the case $d=1$ is the one which is relevant for purposes of Theorem \ref{t:main}. 

The outline of our argument is as follows: We will start with basic estimates on the terms $C$, $D$, $P$, and $F$, the terms in the local energy equality that depend on derivatives of $\phi$ (and hence are singular).  In this subsection, we give conditions on $p$, $q$, $d$, and $\a$ that guarantee that each of the terms $C$, $D+2P$, and $F$ vanish as $\d\to 0$; as argued above, energy balance is achieved when all of these vanish concurrently.  We treat $d$ as fixed; therefore, for each value of $\a>0$, we get a different collection of pairs $(p,q)$ for which $u\in L^q L^p$ implies energy balance.  In the following subsection, we take the union over $\a$ of all such regions to obtain all possible pairs $(p,q)$ for which our method is valid.  However, in order to record our results as explicitly as possible, we frame the process of taking this union as an optimization problem; see below.  Once this optimization problem has been solved, there is nothing more to prove, and we conclude our discussion of the one-time singularity at that point.  

Let us bound term $C$ first. By H\"older's inequality, we have that for all $p,q \geq 2$,
\begin{equation}\label{e:C}
\begin{split}
|C|& \leq \|u\|_{L^q(I;L^p)}^2 \left( \int_{-1}^0 \left( \int_{\dom} |\p_t \phi(x,t)|^{\frac{p}{p-2}}\,dx \right)^{\frac{p-2}{p}\frac{q}{q-2}} dt \right)^{\frac{q-2}{q}} \\
& \leq \|u\|_{L^q(I;L^p)}^2 \left( \int_{-1}^0 \left( \sum_i r_i^{- \frac{\a p}{p-2} + 3}\chi_{I_i}(t) \right)^{\frac{p-2}{p}\frac{q}{q-2}} dt \right)^{\frac{q-2}{q}}.
\end{split}
\end{equation}
Note that if $q<\infty$, then we have $\|u\|_{L^q(I,L^p)} \ra 0$ since $|I|\to 0$ as $\d\to 0$. So in the case $q<\infty$, in order to conclude that $C\to 0$, it suffices to prove that the term in parentheses is bounded as $\d\to 0$; the latter need not vanish. Vanishing of this term (as well as $D+2P$ and $F$; see below) for certain pairs $(p,\infty)$ will follow by interpolation.

The viscous term $F$ is bounded by
\begin{equation}\label{}
|F| \leq  \int_{Q^*} |u|^2 |\n \phi|^2\,dx\,dt + \int_{Q^*} |\n u|^2\,dx\,dt.
\end{equation}
Clearly, the second integral on the right vanishes as $\d \ra 0$. For the first integral, we have a bound similar to $C$:
\begin{equation}\label{e:F}
 \int_{Q^*} |u|^2 |\n \phi|^2\,dx\,dt \leq \|u\|_{L^q(I;L^p)}^2  \left( \int_{-1}^0 \left( \sum_i r_i^{-\frac{2p}{p-2} + 3} \chi_{I_i}(t) \right)^{\frac{p-2}{p} \frac{q}{q-2}}\,dt \right)^{\frac{q-2}{q}}.
\end{equation}
Before we proceed with estimates for $D$ and $P$, let us produce conditions on $p$ and $q$ that guarantee vanishing of the right-hand sides of \eqref{e:C} and \eqref{e:F}. The following lemma will assist us.

\begin{LEMMA}\label{l:conv} Let $d$, $\d$, $r_i$, $I_i$ be as above, and let $\s,s$ be positive numbers.  Suppose the sum $H = \sum_i r_i^d$ is finite. Then the inequality 
\begin{equation}
\label{lemma_ineq}
\int \left( \sum_i r_i^{-\s}\chi_{I_i}(t) \right)^s\,dt  \lesssim H^s
\end{equation}
holds whenever $s\ge 1$ and $s(\s + d)\le \a$ or $s<1$ and $s(\s + d)< \a$; the implied constant is independent of $\d$.  When $d = 0$, the above holds (trivially) under the nonstrict assumption $s\s \leq \a$.
\end{LEMMA}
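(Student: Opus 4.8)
The plan is to prove \eqref{lemma_ineq} by splitting on whether $s\ge1$ or $s<1$, and in both cases to use the elementary geometry of the function $f(t):=\sum_i r_i^{-\s}\chi_{I_i}(t)$. Since each $I_i=(-2r_i^\a,2r_i^\a)$ is centred at the origin, the intervals $I_i$ are nested; hence $f$ is supported in $(-2R^\a,2R^\a)$ with $R:=\max_i r_i<\d<1$, and for $|t|=\rho$ one has $f(t)=\sum_{i\,:\,r_i>(\rho/2)^{1/\a}}r_i^{-\s}$, which is non-increasing in $\rho$. (There are finitely many $B_{r_i}(x_i)$, so all sums below are finite.) These observations drive the argument.

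\emph{Case $s\ge1$.} By the triangle inequality in $L^s(\R)$,
\[
\left\|\sum_i r_i^{-\s}\chi_{I_i}\right\|_{L^s}\le\sum_i r_i^{-\s}\,|I_i|^{1/s}=4^{1/s}\sum_i r_i^{\a/s-\s}.
\]
The hypothesis $s(\s+d)\le\a$ is exactly $\a/s-\s\ge d$, so (as $r_i<1$) $r_i^{\a/s-\s}\le r_i^d$ and hence $\sum_i r_i^{\a/s-\s}\le H$. Taking $s$-th powers gives \eqref{lemma_ineq} with implied constant $4$; this also settles $d=0$, $s\ge1$, since there $\a/s-\s\ge0$ and $r_i^{\a/s-\s}\le1=r_i^0$.

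\emph{Case $s<1$.} Now the triangle inequality runs the wrong way, and the crude bound $\bigl(\sum_i r_i^{-\s}\chi_{I_i}\bigr)^s\le\sum_i r_i^{-\s s}\chi_{I_i}$ is too lossy, as it discards the overlap of the $I_i$; this is the heart of the matter. Instead, decompose the support $(0,2R^\a)$ of $\rho\mapsto f(\rho)$ dyadically according to the scale of the radii that still contribute at a given $\rho$: for $k\ge0$ put $J_k:=\bigl\{\rho:(\rho/2)^{1/\a}\in[2^{-k-1}R,\,2^{-k}R)\bigr\}$, so that $|J_k|\asymp 2^{-k\a}R^\a$ (constant depending only on $\a$) and, on $J_k$, using that $x\mapsto x^{-\s-d}$ is decreasing and $\s+d>0$,
\[
f(\rho)\le\sum_{i\,:\,r_i>2^{-k-1}R}r_i^{-\s}=\sum_{i\,:\,r_i>2^{-k-1}R}r_i^{d}\,r_i^{-\s-d}\le(2^{-k-1}R)^{-\s-d}\sum_i r_i^d\lesssim H\,R^{-\s-d}\,2^{k(\s+d)}.
\]
Summing the dyadic contributions,
\[
\int\left(\sum_i r_i^{-\s}\chi_{I_i}(t)\right)^{s}dt=2\sum_{k\ge0}\int_{J_k}f(\rho)^s\,d\rho\lesssim H^s\,R^{\,\a-s(\s+d)}\sum_{k\ge0}2^{-k(\a-s(\s+d))}.
\]
Since $\a-s(\s+d)>0$ by hypothesis, the geometric series sums to a constant depending only on $\a,s,\s,d$, while $R^{\a-s(\s+d)}<1$ because $R<1$. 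This proves \eqref{lemma_ineq}, with implied constant independent of $\d$ and of the chosen cover.

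\emph{Case $d=0$.} The only configuration the two cases above leave open is $d=0$, $s<1$, $\s s=\a$, where the geometric series degenerates. But then $\sum_i r_i^0\le\cH_0(S)+1$ bounds the number of covering balls independently of $\d$, and $\bigl(\sum_i r_i^{-\s}\chi_{I_i}\bigr)^s\le\sum_i r_i^{-\s s}\chi_{I_i}$ integrates to $\int(\cdots)^s\,dt\le\sum_i r_i^{-\s s}|I_i|=4\sum_i r_i^{\a-\s s}=4\sum_i 1=4H\lesssim H^s$, the last step because $1\le H\le\cH_0(S)+1$. The principal obstacle throughout is precisely this $s<1$ regime: Minkowski's inequality is unavailable and the pointwise substitute ignores the concentricity of the $I_i$, producing $\sum_i r_i^{\a-\s s}$, which is not controlled by $H^s$ in general; the dyadic decomposition repairs this by exploiting that $f$ is a monotone layered profile and by splitting $r_i^{-\s}=r_i^d\cdot r_i^{-\s-d}$ on each shell so that the full mass $\sum_i r_i^d=H$ is spent at once.
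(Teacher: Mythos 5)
Your proof is correct and follows essentially the same strategy as the paper's: for $s\ge 1$ you use Minkowski's inequality in $L^s$ where the paper applies H\"older's inequality to the sum (interchangeable here), and for $s<1$ both arguments exploit the nestedness of the concentric intervals $I_i$ via a dyadic decomposition plus a geometric series that converges exactly under the strict condition $s(\s+d)<\a$ --- you organize this as dyadic time-shells relative to $R=\max_i r_i$, while the paper groups the radii into dyadic classes $R_j$ with cardinality $N_j\lesssim 2^{jd}H$, but this is equivalent bookkeeping. One small point in your favor: at the endpoint $d=0$, $s<1$, $s\s=\a$, which the paper dismisses as trivial, your constant depends on $H\le \cH_0(S)+1$ rather than only on $\a,s,\s$; this is legitimate since the lemma only demands independence of $\d$ (and a cover-independent constant is in fact impossible there, as radii $r_i=2^{-i}$, $i\le N$, give $\int f^s\,dt\sim N$ versus $H^s=N^s$), so your explicit treatment is the honest one.
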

\begin{proof}
Case 1. $s\ge 1$. By H\"older's inequality, we have 
\begin{equation}\label{}
\begin{split}
\left( \sum_i r_i^{-\s}\chi_{I_i}(t) \right)^s
 =  \left( \sum_i r^d_i r_i^{-\s-d}\chi_{I_i}(t) \right)^s 
& \leq \left( \sum_i r_i^{d} \right)^{s-1} \sum_i r_i^{d-(\s+d)s}\chi_{I_i}(t) \\
& = H^{s-1} \sum_i r_i^{d-(\s+d)s}\chi_{I_i}(t) .
\end{split}
\end{equation}
Integrating in time, we obtain 
\[
\int \left( \sum_i r_i^{-\s}\chi_{I_i}(t) \right)^s\,dt \lesssim  H^{s-1} \sum_i r_i^{d-(\s+d)s+\a}.
\]
The sum is at most $H$ whenever the condition stated in the lemma is satisfied.

Case 2. $s<1$. For each $j\in \Z$, define $R_j:=\{r_i: r_i \in [2^{-j},\,2^{-j+1})\}$, and let $N_j$ denote the cardinality of $R_j$. Clearly, $N_j\lesssim 2^{jd} H$ and $N_j=0$ for $j\le 0$. Also denote $J_j = [-2^{(-j+1)\a},\,2^{(-j+1)\a}]$.  So if $r_i \in R_j$, then $r_i^{-\s}\chi_{I_i}(t)\le 2^{j\s}\chi_{J_j}(t)$. Therefore,
\[
\begin{split}
\int \left( \sum_i r_i^{-\s}\chi_{I_i}(t) \right)^s\,dt & \le \int \left( \sum_j N_j 2^{j\s}\chi_{J_j}(t)\right)^s \,dt
\lesssim H^s \int \left( \sum_j 2^{j(\s+d)}\chi_{J_j}(t)\right)^s \,dt \\
& \leq H^s \int \sum_{j=1}^\infty 2^{j(\s+d)s}\chi_{J_j}(t)\,dt \lesssim   H^s \sum_{j=1}^\infty 2^{j((\s+d)s - \a)}.
\end{split}
\]
The final sum converges to an adimensional number by the assumption of the lemma.
\end{proof}

With \lem{l:conv} in hand, we continue our discussion of the terms $C$ and $F$ for various values of $p,q \geq 2$ and arbitrary $\a>0$. In order to obtain the desired conditions which guarantee vanishing of these terms, it suffices to translate between the quantities $s, \s$ in the lemma and the integrability exponents $p$ and $q$ at hand.

First, note that $p<q \iff \frac{p-2}{p}\frac{q}{q-2}<1$ and $p\geq q \iff \frac{p-2}{p}\frac{q}{q-2}\geq 1$. Applying \lem{l:conv}  with $\s = \frac{\a p}{p-2} - 3$ and $s = \frac{p-2}{p}\frac{q}{q-2}$, we see that $C$ vanishes whenever 
\begin{equation}
\label{C}
\frac{3-d}{p} + \frac{\a}{q}\le \frac{3-d}{2},\; p\ge q \geq 2;\quad 
\frac{3-d}{p} + \frac{\a}{q}< \frac{3-d}{2},\; 2\leq p< q.
\end{equation}
Reasoning similarly, we have $F\to 0$ whenever
\begin{equation}
\label{F}
\frac{3-d}{p} + \frac{\a}{q}\le \frac{3-d + \a-2}{2},\; p\ge q \geq 2;\quad 
\frac{3-d}{p} + \frac{\a}{q} < \frac{3-d + \a-2}{2},\;2 \leq p< q.
\end{equation}
In both cases, the conditions are nonstrict if $d=0$. 

We now turn our attention to the terms $D$ and $P$.  The estimates we use to bound these terms depend on whether $p\ge 3$ or $p<3$; we consider each case in turn.  First, suppose $p,q\in [3,\infty)$. Using H\"older's inequality together with the bound $\|up\|_{L^{q/3}(I;L^{p/3})}\lesssim \|u\|_{L^q(I;L^p)}^3$, we have the following bound:
\begin{equation}
\label{}
|D+2P| \lesssim \|u\|_{L^qL^p}^3  \left(  \int_{-1}^0 \left( \sum_i   
r_i^{-\frac{p}{p-3} +3} \chi_{I_i}(t) \right)^{\frac{p-3}{p} \frac{q}{q-3}} dt \right)^{\frac{q-3}{q}}.
\end{equation}
Arguing as before with the use of Lemma~\ref{l:conv}, we see that $|D+2P|\to 0$ whenever 
\begin{equation}
\label{DP}
\frac{3-d}{p} + \frac{\a}{q}\le \frac{2 + \a - d}{3},\; 3\le q\le p<\infty;\quad 
\frac{3-d}{p} + \frac{\a}{q}< \frac{2 + \a - d}{3},\; 3\leq p< q<\infty,
\end{equation}
with the nonstrict inequality in both cases if $d=0$.

In the case $p<3$, we can no longer use H\"older's inequality alone to bound the term $D+2P$. Instead, we will use interpolation with the enstrophy norm. When $p<3$, we have
\begin{equation}
\label{e:DPpl3}
|D+2P|\lesssim \|u\|_{L^2 H^1}^{3\b} \|u\|_{L^q L^p}^{3(1-\b)}\|\n \phi\|_{L^\s L^\infty}, 
\end{equation}
where 
\begin{equation}
\label{e:defbeta}
\frac13 = \frac{\b}{6} + \frac{1-\b}{p}\implies 
\b = \frac{6-2p}{6-p};
\quad \frac1\s = 1 - \frac{3\b}{2} - \frac{3(1-\b)}{q} = \frac{2pq - 3p - 3q}{(6-p)q}.
\end{equation}
Now 
\[
\|\n \phi\|_{L^\s L^\infty}^\s = \int \sup_i r_i^{-\s} \chi_{I_i}(t)\,dt \le \int \sup_j 2^{j\s} \chi_{J_j}(t)\,dt \lesssim \sum_j (2^{\a-\s})^{-j},
\]
and the sum on the right is bounded whenever $\s < \a$.  Substituting in for $\s$ and simplifying, we obtain
\begin{equation}
\label{DPenstr}
\frac{2 + \a}{p} + \frac{\a}{q} < \frac{1 + 2\a}{3},  \quad p<3 \leq q.
\end{equation}

\subsection{Optimization and the main result} Let us now discuss the optimal values of $\a$, beginning with the case $p\ge 3$.  Here we have the three constraints \eqref{C}, \eqref{F}, and \eqref{DP}, representing a triple of parallel lines. The $C$-line pivots around the energy space $L^\infty L^2$ and rotates counterclockwise (toward a more stringent condition) as $\a$ increases. The $DP$-line pivots around $L^3 L^\frac{9-3d}{2-d}$ and also rotates counterclockwise (but toward a more relaxed condition) as $\a$ increases. The $F$-line pivots around $L^2 L^\frac{6-2d}{1-d}$ counterclockwise, relaxing as $\a$ increases. (Note that some exponents can become negative for larger $d$'s; however, the region beyond $p,q=\infty$ can be disregarded at this moment.) Therefore, the conditions become optimal when the two lower lines coincide. Simple linear algebra shows that for $d<1$, the $C$- and $DP$-lines are lower; for $d>1$, the $C$- and $F$-lines are lower; and at $d=1$, all three lines coincide at their optimal tilt. So in the case $d\leq 1$, we set the $C$- and $DP$-lines equal to one another and find that $\a = \frac{5-d}{2}$. (Clearly, $\a \geq 2$ in this case, and so the condition on $F$ is more relaxed than the one on $C$.) When $d >1$, we set the $C$- and $F$-lines equal to one another and recognize $\a = 2$ as being optimal. We thus obtain the following conditions, which guarantee energy equality:
\begin{align}
\frac{2(3-d)}{p} + \frac{5-d}{q} & \le 3-d, \;\; 3 \le p,\;q\le p,  \; d \leq 1 \label{opt1} \\
\frac{2(3-d)}{p} + \frac{5-d}{q} &< 3-d, \;\; 3 \leq p < q,\;  d \leq 1 \label{opt2}\\
\frac{3-d}{p} + \frac{2}{q} &< \frac{3-d}{2}, \;\; 3 \leq p < q,\;  1<d < 3. \label{opt3}
\end{align}

Before proceeding, we make a few remarks concerning these conditions. First, we note that even though \eqref{DP} is valid only inside the square where $p\in [3,\infty)$ and $q\in [3,\infty)$, we can interpolate in order to include certain pairs $(p,q)$ outside of this region in \eqref{opt1}--\eqref{opt3}.  Second, in the case $d>1$, the optimal line drops below the Lions space $L^4 L^4$ in such a way that only the case $p<q$ yields new results; this line intersects the segment $[L^4 L^4, L^\infty L^3]$ at the space $L^\frac{6+2d}{3-d} L^\frac{6+2d}{1+d}$. Third, the inequality \eqref{opt2} once again becomes nonstrict in the case $d=0$. And finally, for the values $0\leq d \leq 1$, the point on the bisectrice separating the open and closed regions is  $L^{\frac{11-3d}{3-d}} L^{\frac{11-3d}{3-d}}$. When $d=1$, it becomes the classical Lions space $L^4 L^4$.  

Let us now address the case $p<3$. In this case, we use \eqref{DPenstr} to replace \eqref{DP} in the previous argument, while \eqref{C} and \eqref{F} are understood under the lighter restriction $p,q \geq 2$.  Also, note that the region under consideration now lies only in the cone $q>p$. The new $DP$-line pivots around the enstrophy point $L^2 L^6$ counterclockwise as $\a$ increases.  For $d\leq 1$, a non-trivial new region appears as $\a$ increases beyond $\a = \frac{5-d}{2}$. The $F$-line is less restrictive than $C$-line,  so we can disregard it. At $\a = \frac{5-d}{2}$, the $C$- and $DP$-lines intersect at  $L^{\frac{15 - 3d}{3-d}}L^3$. The point of intersection reaches its final state at the energy space $L^\infty L^2$ when $\a = 4$. In the process, it traverses the curve given by 
\begin{equation}
\label{xy_curve}
(18-6d)x^2 + (6-6d)xy - (21-7d)x - (7 - 3d)y + 6-2d = 0
\end{equation}
in coordinates  $x = p^{-1}$ and $y = q^{-1}$.  Notice that the curve in fact contains both $L^2 L^6$ and $L^\infty L^2$ for all values of $d$, as we expect it to. (Indeed, these points are the two axes of rotation for our lines.)  However, since we are restricted to the case when $p<3$, the part of the curve that we can use is limited to that connecting $L^{\frac{15 - 3d}{3-d}}L^3$ and $L^\infty L^2$.  The curve is a part of a hyperbola, as can be seen from the negative Hessian. (The exception is when $d=1$, in which case the curve is a parabola.)

For $d >1$, the two lines $C$ and $F$ coincide when $\a = 2$; at this value of $\a$, the new $DP$-line cuts through the $C$-line at space $L^\frac{6+6d}{3-d} L^\frac{6+6d}{1+3d}$, which is already inside the strip $p<3$.  It does not make sense to decrease $\a$ since doing so would move the $F$- and $DP$-lines clockwise inside the already discovered region. Increasing $\a$ above $2$ makes the $F$-line more relaxed, and the intersection point of $C$- and $DP$-lines falls on the same curve \eqref{xy_curve}.  This time, however, the curve begins farther to the right at the space $L^\frac{6+6d}{3-d} L^\frac{6+6d}{1+3d}$ and ends at $L^\infty L^2$, corresponding to the fixed range $2\leq \a \leq 4$. 

Finally, recall that in all the arguments above, we have assumed $q<\infty$ in order to ensure that the vanishing of the terms comes from the norm $L^q(I; L^p(\dom))$ and not from the Hausdorff measure of $S$. We also assumed $p<\infty$ in order ensure boundedness of the Riesz transforms on $L^p$. (This was necessary in order to bound the pressure term directly.) However, the cases $p = \infty$ and $q = \infty$ follow automatically by interpolation with the Leray--Hopf line, which lands the solution strictly inside the quadrant $q,p<\infty$.

Figures \ref{fig:dzero}-\ref{fig:d13} illustrate the new regions uncovered in each case.

\begin{figure}
\begin{minipage}{0.45\linewidth}
\centering
\includegraphics{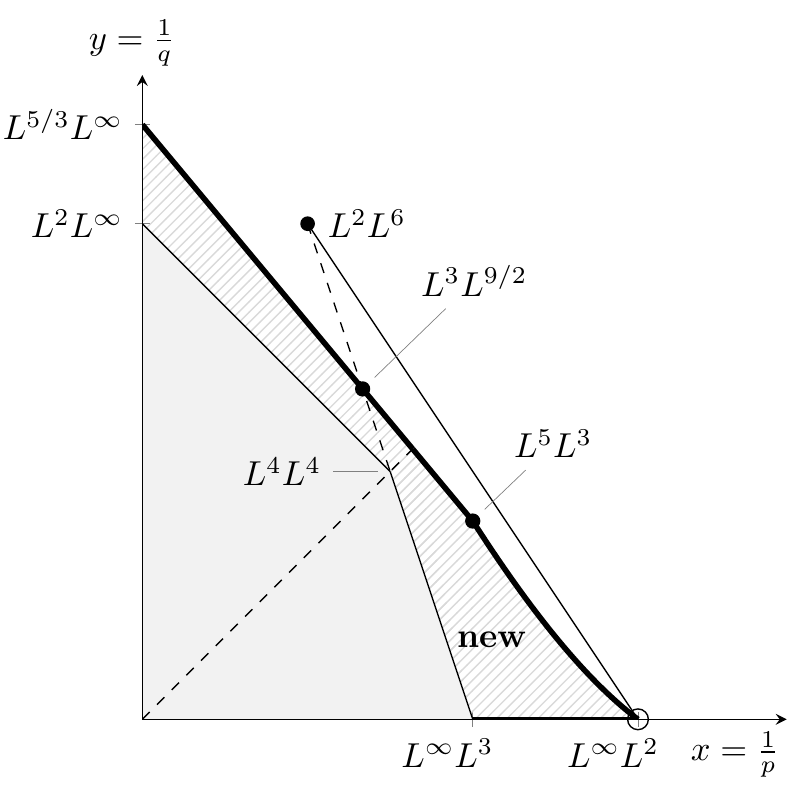}
\vspace{-9 mm} 
\caption{$d=0$.}
\label{fig:dzero}
\end{minipage}
\begin{minipage}{0.45\linewidth}
\centering
\includegraphics{./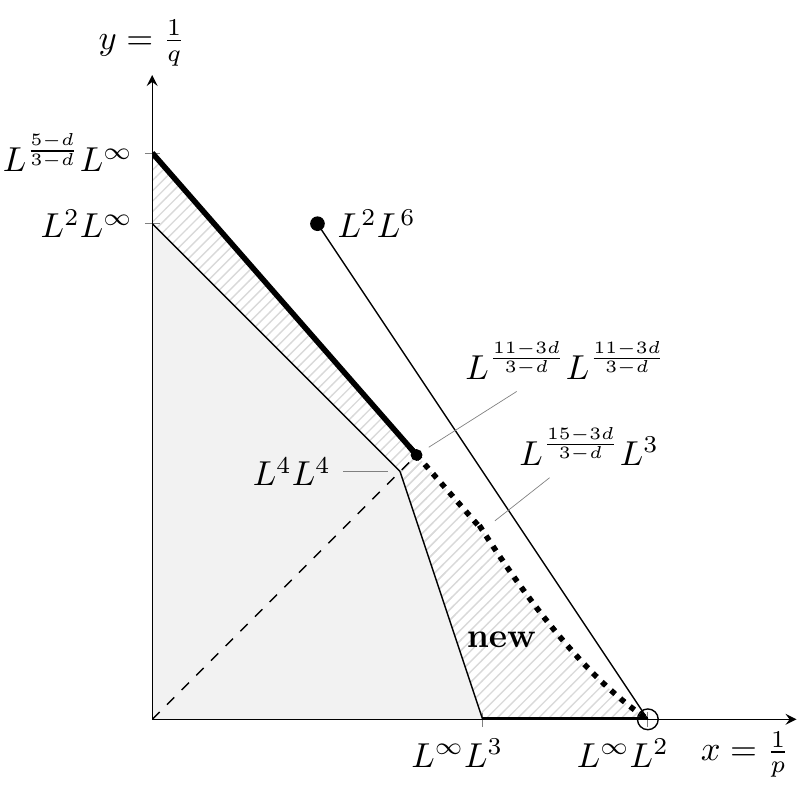}
\vspace{-9 mm} 
\caption{$0<d<1$.}
\label{fig:done}
\end{minipage}
\end{figure}

\begin{figure}
\begin{minipage}{0.45\linewidth}
\centering
\includegraphics{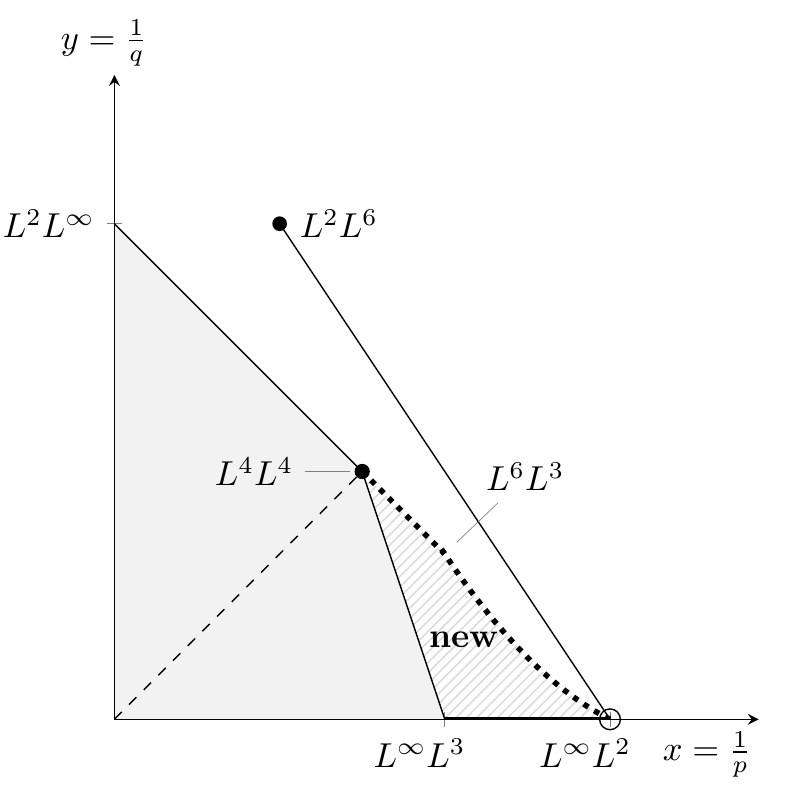}
\vspace{-9 mm} 
\caption{$d=1$.}
\label{fig:d1}
\end{minipage}
\begin{minipage}{0.45\linewidth}
\centering
\includegraphics{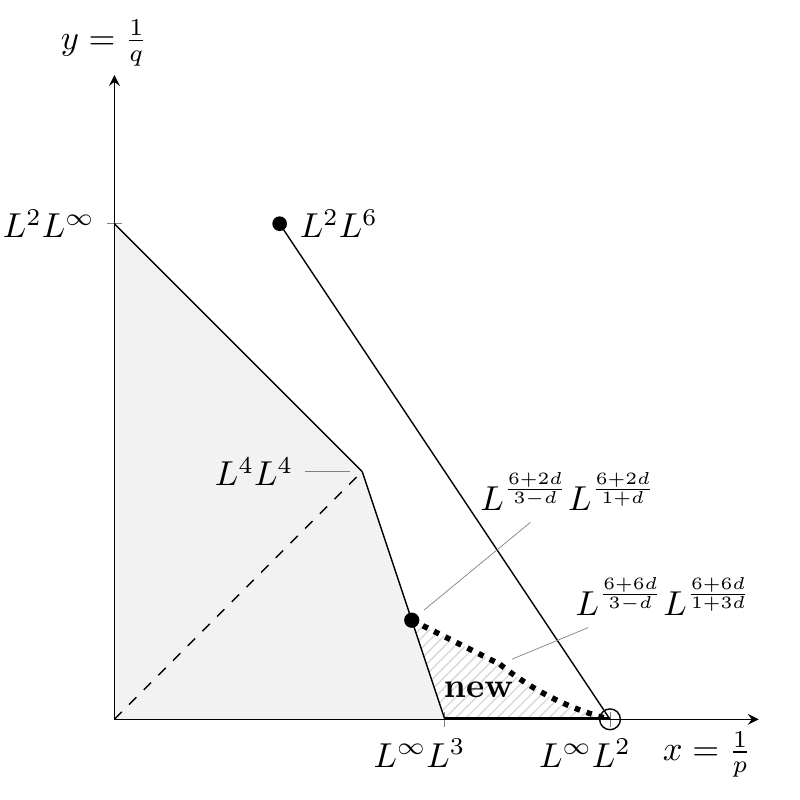}
\vspace{-9 mm} 
\caption{$1<d <  3$.}\label{fig:d13}
\end{minipage}
\end{figure}

\section{General singularities}\label{s:gen}
Even if the energy equality is known on each time interval of regularity including at the critical time, it is unknown whether energy equality holds globally on the time interval of existence of the weak solution. This is due to lack of a proper gluing procedure that could restore energy equality from pieces. In this section, we therefore address the question when singularity set $S$ is spread in space-time. In this case, we have no freedom in choosing the time scale of the covering cylinders; rather, the scale should already be built into the definition of the Hausdorff dimension. We choose to work with the classical parabolic dimension, i.e., $\a=2$ in our terms.

The main technical difference of this general case compared to the case of a one-time singularity is that when $s < 1$, the conclusion of Lemma \ref{l:conv} may not be valid.  Instead, we can only prove that the left side of \eqref{lemma_ineq} is bounded above by $H$ (multiplied by some constant which is independent of $\d$) under the stronger assumption $\s s + d\le \a =2$. This is achieved simply by bringing the exponent $s$ inside the sum.  However, the condition $\s s + d \le \a = 2$ is the sharpest possible under which the conclusion of the lemma holds, as one can see by considering an example of the opposite extreme, where all the intervals $I_i$ are disjoint.  However, the proof of the lemma in the case $s\ge 1$ does not depend on the intervals $I_i$ being nested; the proof and conclusion remain valid in this case.

Assume then that $S$ has finite $d$-dimensional parabolic Hausdorff measure for some $d\in[0, 1]$ but no other special properties.  (Our method does not yield anything new for $d>1$, so we do not treat these values of $d$.) Let $B_r(x)$, $\d$ be as above; then choose finitely many $(x_i, t_i)\in \R^3 \times (0,T]$ and $r_i\in (0,\d)$ such that $S\subset Q:=\bigcup_i Q_i$, where $Q_i = B_{r_i}(x_i)\times (t_i - r_i^2,\;t_i + r_i^2)$. Write $I_i = (t_i - 2r_i^2,\;t_i + 2r_i^2)$.  Let $Q^*$ denote the union of the double-dilated cylinders and $I = \bigcup_i I_i$.  Let $\psi$ be as above, and put $\phi_i = \psi(|x-x_i|/r_i) \psi(|t-t_i|/r_i^2)$ and $\phi = 1 - \sup_i \phi_i$.  

Let us note that in the special case $d=0$, $S$ is once again a finite point set.  The energy balance relation holds on each of the finitely many time-slices associated to each of the points in $S$ under the criteria of the previous section.  Therefore, it holds under these criteria for a general $0$-dimensional singularity set. Below we assume that $d\in (0,1]$.  

We also note that, as before, we have $|I|\to 0$ as $\d\to 0$, even though the intervals $I_i$ are no longer nested.  This is because
\begin{equation}
\label{e:Ito0}
|I|\le \sum_i |I_i|\lesssim \sum_i r_i^{d + (2-d)}< \d^{2-d} \sum_i r_i^d
\end{equation}
and because $d<2$ in all cases considered in this section.  

Assume $p\le q$.  Using bounds analogous to \eqref{e:C}, \eqref{e:F}, we see that $C,F\to 0$ whenever $\left(\frac{2p}{p-2} - 3\right)\frac{p-2}{p}\frac{q}{q-2} + d \le 2$, or, simplifying, 
\begin{equation}
\label{e:CFgen}
\frac3p + \frac{2-d}{q} \le \frac{3-d}{2} \quad (p\le q).
\end{equation}
Similarly, if $\infty>q\ge p\ge 3$, then $D,P\to 0$ whenever 
\begin{equation}
\label{e:DPgen_3pq}
\frac3p + \frac{2-d}{q} \le \frac{4-d}{3}
\quad (3\le p\le q<\infty).
\end{equation}
Of course, when $d\in [0,1]$, we have $\frac{4-d}{3}\le \frac{3-d}{2}$, so the restriction \eqref{e:DPgen_3pq} is limiting in this case.  

On the other hand, if $p<3$, then we use \eqref{e:DPpl3} and \eqref{e:defbeta}. Estimating 
\[
\|\n \phi\|_{L^\s L^\infty}^\s 
\le \sum_i \int r_i^{-\s} \chi_{I_i}(t)\,dt
\le \sum_i r_i^{2-\s},
\]
we see that $D,P\to 0$ whenever and $2-\s\ge d$, i.e.,
\begin{equation}
\label{e:DPgen_p3q}
\frac{4-d}{p} + \frac{2-d}{q} \le \frac{5-2d}{3}, \quad p<3.
\end{equation}
Notice that we could have also reached this inequality by interpolation.  This argument covers all terms under consideration in the case $p\le q$; it remains to deal with the case when $p>q$.  Most of the analysis from the single time-slice situation carries over in this case since Lemma \ref{l:conv} does not require nested $I_i$ in the case $s\ge 1$. However, the lack of freedom to choose $\a$ restricts the applicable range of pairs $(p,q)$.  After translating the condition $s(\s + d)\le 2$ into conditions on $C,D,P,F$, we see that $D,P$ are most stringent when $p\ge q\ge 3$ and correspond to the condition 
\[
\frac{3-d}{p} + \frac{2}{q} \le \frac{4-d}{3}, \quad 3\le q\le p.
\]
Using interpolation to treat the cases $p=\infty$, $q<3$, and $q = \infty$ as well, we can state our criteria for energy balance as follows:
\begin{subequations}
\label{e:gen_rest}
\begin{align}
\frac{2(3-d)}{p} + \frac{5-d}{q} \le 3-d, \quad q\le 3\le p  \\
\frac{3-d}{p} + \frac{2}{q} \le \frac{4-d}{3}, \quad 3\le q\le p \\
\frac{3}{p} + \frac{2-d}{q} \le \frac{4-d}{3}, \quad 3\le p\le q \\
\frac{4-d}{p} + \frac{2-d}{q} \le \frac{5-2d}{3}, \quad p\le 3\le q.
\end{align}
\end{subequations}
As $d\to 1^-$, these criteria collectively collapse to the region implicated by the Lions $L^4 L^4$ condition.  However, when $d\in (0,1)$ we obtain a new region bounded by the points $L^{\frac{5-d}{3-d}}L^\infty$,  $L^3 L^{\frac{9-3d}{2-d}}$, $L^{\frac{15 - 3d}{4-d}} L^{\frac{15 - 3d}{4-d}}$, $L^{\frac{6-3d}{1-d}}L^3$, $L^\infty L^{\frac{5-2d}{12-3d}}$.  See Figures \ref{fig:0<d<1_gen} and \ref{fig:d=1_gen}.

\begin{figure}
\begin{minipage}{0.45\linewidth}
\includegraphics{./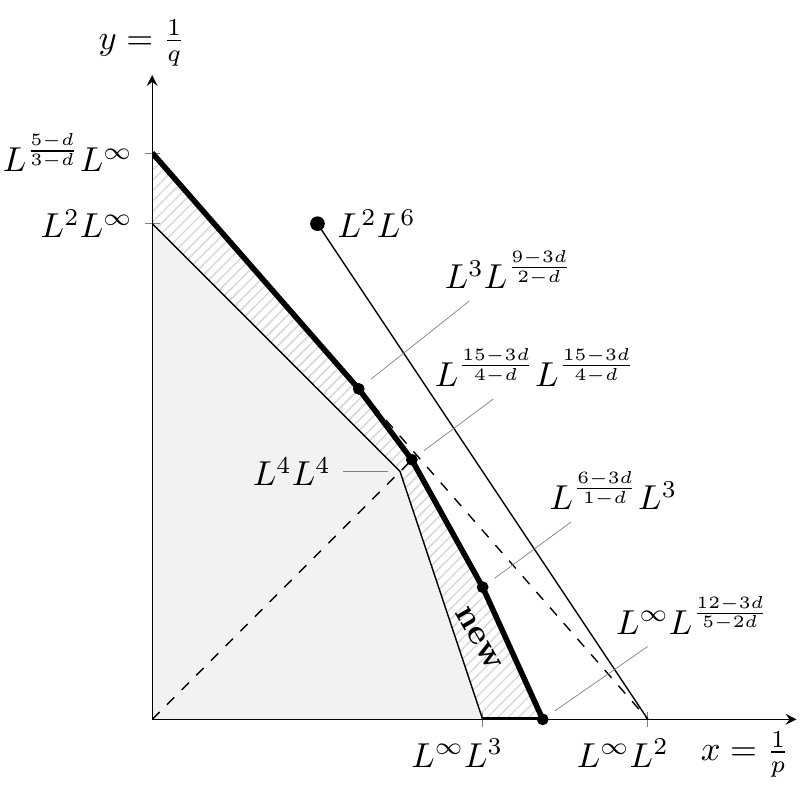}
\vspace{-8 mm}
\caption{$0<d<1$.}\label{fig:0<d<1_gen}
\end{minipage}
\begin{minipage}{0.45\linewidth}
\includegraphics{./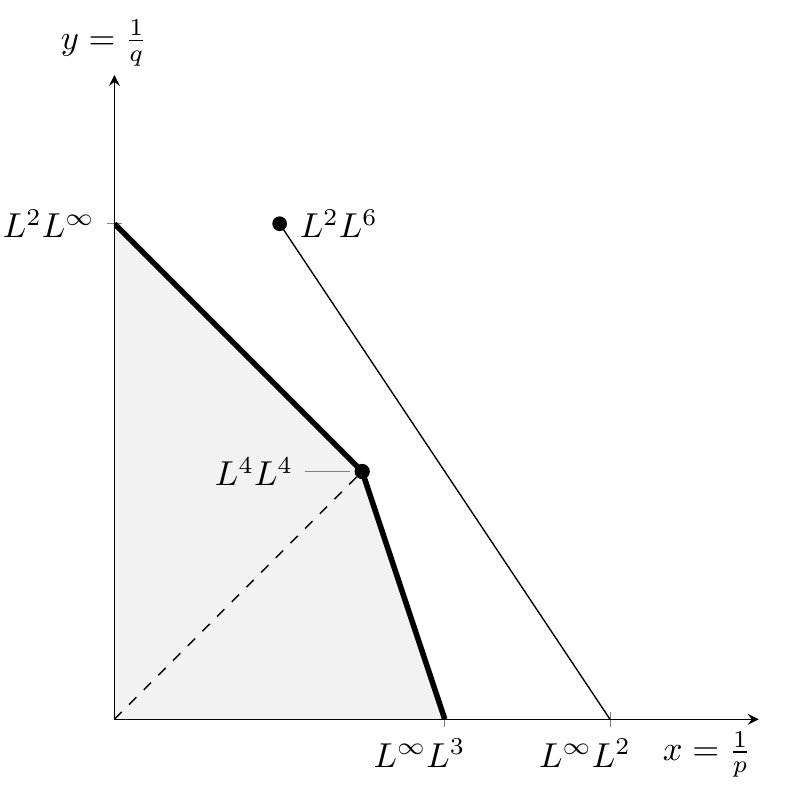}
\vspace{-8 mm}
\caption{$d=1$.}\label{fig:d=1_gen}
\end{minipage}
\end{figure}

\section{Fractional NSE}\label{s:frac}
In this section, we present extensions of the results for the classical NSE to the case of fractional dissipation $\g<1$:
\begin{equation}
\label{e:momentum_g}
\p_t u + u\cdot \n u + \nu \L_{2\g} u = - \n p
\end{equation}
\begin{equation}
\label{e:divfree_g}
\n \cdot u = 0
\end{equation}
where $\widehat{\L_s u} = |\xi|^s \widehat{u}$. We define the (Onsager) regular and singular sets as in the classical case.  We also define the postinitial singularity set $S$ as before.  In the fractional dissipation case,  weak solutions belong to $L^2 H^\g \cap L^\infty L^2$, and the energy equality can be written 
\begin{equation}
\label{localeefrac}
\begin{split}
\int_{\R^3} |u(t)|^2 \phi & - \int_{\R^3} |u(s)|^2 \phi - \int_{\R^3 \times (s,t)} |u|^2 \p_t \phi \\
&= \int_{\R^3 \times (s,t)} |u|^2  u \cdot \n \phi + 2 \int_{\R^3 \times (s,t)} p\ u \cdot \n \phi - 2\nu \int_{\R^3 \times (s,t)} |\L_{\g} u|^2 \phi \\ 
&-2\nu \int_{\R^3 \times (s,t)} \L_{\g} u \cdot u\L_{\g}\phi - 2\nu \int_{\R^3 \times (s,t)} \L_{\g} u \cdot [\L_{\g}(u\phi) - (\L_{\g}u)\phi - u\L_{\g}\phi].
\end{split}
\end{equation}
As in the classical case, this equality is valid for $\phi\in W^{1,\infty}(\R^3\times [0,T])$ which are supported outside $S$. We label our terms in the same manner as in the classical case:
\[
A - B - C = D + 2P - 2\nu E - 2\nu F - 2\nu G.
\]
As before, convergence of $A,B,E$ is obvious; proving energy equality amounts to showing that the other terms vanish.  

For sufficiently regular $f$ and $\g\in (0,2)$, we have 
\[
\L_\g f(x) = -c_\g \int \frac{\d_{-z} \d_z f(x)}{|z|^{3+\g}}\,dz = \widetilde{c}_\g \ p.v. \int \frac{\d_z f(x)}{|z|^{3 + \g}}\,dz,
\]
where $\d_z$ denotes the difference operator $\d_z f(x) = f(x + z) - f(x)$. 

\begin{lemma}
Suppose $\phi\in W^{1,a}$ for some $a\in [1,\infty]$, and let $\g\in (0,1)$.  Then $\L_\g \phi\in L^a$, and we have the bound
\begin{equation}
\label{Lgphi_bd}
\|\L_\g \phi \|_{L^a} \lesssim \|\phi\|_{L^a}^{1-\g} \|\n \phi \|_{L^a}^\g.
\end{equation}
\end{lemma}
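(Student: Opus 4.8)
The plan is to prove the bound \eqref{Lgphi_bd} by a standard interpolation-type splitting of the singular integral representation of $\L_\g$ into a near-field and a far-field piece, optimizing the cutoff radius. Using the representation $\L_\g\phi(x) = -c_\g\int \frac{\d_{-z}\d_z\phi(x)}{|z|^{3+\g}}\,dz$, I would split the $z$-integral at $|z| = \rho$ for a parameter $\rho>0$ to be chosen. For $|z|>\rho$, I bound $|\d_{-z}\d_z\phi(x)| \le 4\sup|\phi|$ pointwise — or rather, to get $L^a$ control, I write $\d_{-z}\d_z\phi(x)$ as a sum of translates of $\phi$, apply Minkowski's integral inequality in $x$, and get a contribution $\lesssim \|\phi\|_{L^a}\int_{|z|>\rho}|z|^{-3-\g}\,dz \lesssim \rho^{-\g}\|\phi\|_{L^a}$. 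For $|z|<\rho$, I use instead the first-order bound $|\d_z\phi(x)| \le \int_0^1 |\n\phi(x+tz)|\,|z|\,dt$, so that $|\d_{-z}\d_z\phi(x)|$ is controlled by an average of $|\n\phi|$ over a neighborhood times $|z|$ (applying the fundamental theorem of calculus once more, or more simply using $|\d_{-z}\d_z\phi(x)| \le |\d_z\phi(x)| + |\d_z\phi(x-z)|$ and the first-order bound on each); Minkowski again yields a contribution $\lesssim \|\n\phi\|_{L^a}\int_{|z|<\rho}|z|^{-3-\g}|z|\,dz \lesssim \rho^{1-\g}\|\n\phi\|_{L^a}$, which converges precisely because $\g<1$.

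Combining the two pieces gives $\|\L_\g\phi\|_{L^a} \lesssim \rho^{-\g}\|\phi\|_{L^a} + \rho^{1-\g}\|\n\phi\|_{L^a}$ for every $\rho>0$. Optimizing in $\rho$ — choosing $\rho \sim \|\phi\|_{L^a}/\|\n\phi\|_{L^a}$ (assuming both norms are finite and nonzero; the degenerate cases are handled trivially or by a limiting argument) — yields the claimed bound $\|\L_\g\phi\|_{L^a} \lesssim \|\phi\|_{L^a}^{1-\g}\|\n\phi\|_{L^a}^\g$. One should first justify that the principal-value integral makes sense for $\phi\in W^{1,a}$: the symmetrized second-difference form $\d_{-z}\d_z\phi$ removes the need for a principal value and the near-field estimate just performed shows the integral converges absolutely for a.e. $x$, so $\L_\g\phi$ is well-defined as an $L^a$ function (first established for Schwartz $\phi$ and then extended by density, the estimate being the continuity statement that makes the extension well-defined).

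The main obstacle, such as it is, is bookkeeping with Minkowski's integral inequality to keep everything on the level of $L^a$ norms rather than pointwise bounds — in particular making sure the second-order difference is handled so that only $\|\n\phi\|_{L^a}$ (and not, say, a second derivative) appears in the near-field term. Writing $\d_{-z}\d_z\phi(x) = \bigl(\phi(x+z)-\phi(x)\bigr) - \bigl(\phi(x)-\phi(x-z)\bigr)$ and applying the first-order estimate $|\phi(x\pm z) - \phi(x)| \le \int_0^1|\n\phi(x\pm tz)|\,dt\,|z|$ to each bracket separately does the job cleanly: Minkowski in $x$ turns $\int_0^1 \|\n\phi(\cdot\pm tz)\|_{L^a}\,dt$ into $\|\n\phi\|_{L^a}$ by translation invariance. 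The only genuine point where $\g<1$ enters is the convergence of $\int_{|z|<\rho}|z|^{1-3-\g}\,dz$, which fails at $\g=1$; this is why the hypothesis $\g\in(0,1)$ (rather than $(0,2)$) is needed for this lemma.
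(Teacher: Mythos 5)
Your proof is correct and follows essentially the same route as the paper: split the singular integral defining $\L_\g\phi$ at a radius, bound the near field by $|z|\,\|\n\phi\|_{L^a}$ and the far field by $\|\phi\|_{L^a}$, and optimize the radius to get $\|\phi\|_{L^a}^{1-\g}\|\n\phi\|_{L^a}^{\g}$. The only (immaterial) difference is that you work with the symmetric second difference $\d_{-z}\d_z\phi$, whereas the paper uses the first-difference representation, which is already absolutely convergent near $z=0$ for $\g<1$ once the gradient bound is applied.
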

\begin{proof}
For any $r>0$, we estimate 
\begin{align*}
\|\L_\g \phi \|_{L^a}
& =  \left\| \int_{|z|\le r} \frac{ \d_z \phi }{|z|^{3 + \g}}\,dz + \int_{|z|> r} \frac{ \d_z \phi }{|z|^{3 + \g}}\,dz \right\|_{L^a} \\
& \le \int_{|z|\le r} \frac{\| \n \phi \|_{L^a}}{|z|^{2 + \g}}\,dz + \int_{|z|>r} \frac{2 \| \phi \|_{L^a}}{|z|^{3 + \g}}\,dz \\
& \le r^{-\g} [ r \|\n \phi\|_{L^a} + 2\|\phi\|_{L^a}].
\end{align*}
We put $r = \|\phi\|_{L^a} \|\n \phi\|_{L^a}^{-1}$ to optimize. The bound \eqref{Lgphi_bd} follows immediately.
\end{proof}

\begin{lemma}\label{p:commut}
Let $u\in H^\g \cap L^p$, $p>2$, $\g\in (0,1)$, and $\phi\in W^{1,\frac{2p}{p-2}}$.  Then 
\begin{equation}
\| \L_\g (u\phi) - (\L_\g u)\phi - u\L_\g \phi\|_{L^2} \lesssim \|u\|_{L^p}\|\phi\|_{L^{\frac{2p}{p-2}}}^{1-\g} \|\n \phi \|_{L^{\frac{2p}{p-2}}}^\g.
\end{equation}
The inequality continues to hold when $p=2$ and $2p/(p-2)$ is replaced by $\infty$.
\end{lemma}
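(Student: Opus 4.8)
The plan is to use the integral representation of $\L_\g$ via differences and to write the commutator as a double-difference expression whose kernel decays fast enough to be integrated against products of $u$ and $\phi$. Writing $\d_z f(x) = f(x+z) - f(x)$, one checks the pointwise identity
\[
\d_z(u\phi)(x) - (\d_z u)(x)\,\phi(x) - u(x)\,(\d_z\phi)(x) = (\d_z u)(x)\,(\d_z \phi)(x).
\]
Applying the representation $\L_\g f(x) = \widetilde c_\g\, \mathrm{p.v.}\int |z|^{-3-\g}\d_z f(x)\,dz$ term by term (the principal-value parts cancel in the combination, which is why the expression makes sense even when $u$ is only in $H^\g$), one obtains
\[
\L_\g(u\phi)(x) - (\L_\g u)(x)\phi(x) - u(x)\L_\g\phi(x)
= \widetilde c_\g \int \frac{(\d_z u)(x)\,(\d_z\phi)(x)}{|z|^{3+\g}}\,dz.
\]
So it remains to estimate the $L^2_x$ norm of the right-hand side.

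The next step is to split the $z$-integral at a radius $r>0$ to be chosen, and to use different bounds on $\d_z\phi$ in the two regimes: for $|z|\le r$ one uses $\|\d_z\phi\|_{L^{2p/(p-2)}} \le |z|\,\|\n\phi\|_{L^{2p/(p-2)}}$ together with $\|\d_z u\|_{L^p}\le 2\|u\|_{L^p}$, and for $|z|>r$ one uses $\|\d_z\phi\|_{L^{2p/(p-2)}}\le 2\|\phi\|_{L^{2p/(p-2)}}$. In each case, by Minkowski's integral inequality followed by H\"older in $x$ with exponents $p$ and $2p/(p-2)$,
\[
\left\| \int \frac{(\d_z u)(\d_z\phi)}{|z|^{3+\g}}\,dz \right\|_{L^2_x}
\le \int \frac{\|\d_z u\|_{L^p}\,\|\d_z\phi\|_{L^{2p/(p-2)}}}{|z|^{3+\g}}\,dz,
\]
and the two pieces are bounded respectively by
\[
\|u\|_{L^p}\|\n\phi\|_{L^{2p/(p-2)}} \int_{|z|\le r}\frac{dz}{|z|^{2+\g}}
\lesssim r^{1-\g}\|u\|_{L^p}\|\n\phi\|_{L^{2p/(p-2)}},
\qquad
\|u\|_{L^p}\|\phi\|_{L^{2p/(p-2)}} \int_{|z|> r}\frac{dz}{|z|^{3+\g}}
\lesssim r^{-\g}\|u\|_{L^p}\|\phi\|_{L^{2p/(p-2)}}.
\]
Here the $z$-integrals converge precisely because $0<\g<1$ (so $2+\g<3$) and $\g>0$ (so $3+\g>3$). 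Choosing $r = \|\phi\|_{L^{2p/(p-2)}}\,\|\n\phi\|_{L^{2p/(p-2)}}^{-1}$ to balance the two contributions yields the claimed bound
\[
\|\L_\g(u\phi) - (\L_\g u)\phi - u\L_\g\phi\|_{L^2}
\lesssim \|u\|_{L^p}\,\|\phi\|_{L^{2p/(p-2)}}^{1-\g}\,\|\n\phi\|_{L^{2p/(p-2)}}^{\g}.
\]
The case $p=2$ is identical except that H\"older in $x$ is replaced by $\|\d_z u\|_{L^2}\le 2\|u\|_{L^2}$ and $\|\d_z\phi\|_{L^\infty}\le\min(|z|\|\n\phi\|_{L^\infty},2\|\phi\|_{L^\infty})$.

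The main obstacle is the justification of the principal-value manipulation: a priori $\L_\g u$ need not be a function when $u$ is only in $H^\g\cap L^p$, so the identity above must be read in the right sense. The clean way around this is a density/approximation argument — prove the commutator identity and estimate first for $u$ smooth and decaying (where every term is a genuine absolutely convergent integral), observe that the right-hand side of the final inequality controls the left-hand side uniformly, and then pass to the limit in $u$ along a sequence approximating a general $u\in H^\g\cap L^p$, using that the trilinear-type left-hand side is continuous in this topology (the combination is a bounded bilinear map from $(H^\g\cap L^p)\times W^{1,2p/(p-2)}$ into $L^2$, as the estimate itself shows). The only mild subtlety is that $\phi\in W^{1,2p/(p-2)}$ need not decay, but since we only ever use $\|\d_z\phi\|$ bounds, not $\phi$ itself, no decay on $\phi$ is needed.
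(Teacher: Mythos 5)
Your proof is correct and follows essentially the same route as the paper: the pointwise identity reducing the commutator to $\int |z|^{-3-\g}\,\d_z u\,\d_z\phi\,dz$, Minkowski plus H\"older with exponents $p$ and $\frac{2p}{p-2}$, the split of the $z$-integral at a radius $r$, and the optimization $r=\|\phi\|_{L^{2p/(p-2)}}\|\n\phi\|_{L^{2p/(p-2)}}^{-1}$. The closing density argument justifying the principal-value manipulation is a reasonable extra remark on rigor but not a different method.
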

\begin{proof}
We use the identity
\[
[ \L_{\g}(u\phi) - (\L_{\g}u)\phi - u\L_{\g}\phi](x)
 = - \int \frac{\d_z u(x)\, \d_z \phi(x)}{|z|^{3 + \g}}\,dz
\]
and estimate the right side of this equality.  Let $r>0$ be arbitrary for now.  Then 
\begin{align*}
\left\| \int \frac{\d_z u\, \d_z \phi}{|z|^{3 + \g}}\,dz \right\|_{L^2}
& \le \int_{|z|\le r} \frac{ \|\d_z u\, \d_z \phi \|_{L^2}}{|z|^{3 + \g}}\,dz + \int_{|z|>r} \frac{ \|\d_z u\, \d_z \phi \|_{L^2}}{|z|^{3 + \g}}\,dz\\
& \le 2 \|u\|_{L^p} \left[ \int_{|z|\le r} \frac{ \|\n \phi \|_{L^{\frac{2p}{p-2}}}}{|z|^{2 + \g}}\,dz + \int_{|z|>r} \frac{ 2 \| \phi \|_{L^{\frac{2p}{p-2}}}}{|z|^{3 + \g}}\,dz \right] \\
& \le 2r^{-\g} \|u\|_{L^p} [r \|\n\phi \|_{L^{\frac{2p}{p-2}}} + 2 \|\phi\|_{L^{\frac{2p}{p-2}}}].
\end{align*}
Put $r = \|\phi\|_{L^{\frac{2p}{p-2}}} \|\n \phi\|_{L^{\frac{2p}{p-2}}}^{-1}$ to complete the proof.
\end{proof}

We combine the two propositions above and apply them to our original test function $\phi$: 
\begin{align*}
& \hspace{-10 mm} \int \|u\L_{\g}\phi\|_{L^2}^2 + \|\L_{\g}(u\phi) - (\L_{\g}u)\phi - u\L_{\g}\phi\|_{L^2}^2 \,dt \\
& \lesssim \int (\|u\|_{L^p} \|\phi\|_{L^{\frac{2p}{p-2}}}^{1-\g} \|\n \phi\|_{L^{\frac{2p}{p-2}}}^{\g})^2\,dt \\
& \le \|u\|_{L^q L^p}^2 \|\phi\|_{L^\infty L^{\frac{2p}{p-2}}}^{2(1-\g)} \|\n \phi\|_{ L^{\frac{2q\g}{q-2}} L^{\frac{2p}{p-2}}}^{2\g}.
\end{align*}
Now with the bound $|\n \phi(x,t)|\le \sup_i |\n \phi_i(x,t)|$, we obtain 
\begin{equation}
\label{FG_frac}
\|\n \phi\|_{ L^{\frac{2q\g}{q-2}} L^{\frac{2p}{p-2}}}^{\frac{2q\g}{q-2}}
\le \int \left( \sum_i r_i^{-\frac{2p}{p-2} + 3} \chi_{I_i}(t) \right)^{\frac{p-2}{p}\frac{q}{q-2}\g}\,dt.
\end{equation}
So we can use Lemma \ref{l:conv} to give conditions on when $|F|+|G|\to 0$, depending on whether we are dealing with the one-slice or general type singularity.

\subsection{One-time singularity case, $\frac12<\g<1$}
We recall some of the conditions for the vanishing of $C$ and $D+P$ and (using the lemma) add to them conditions for the vanishing of $F+G$.  Note that the restriction \eqref{DPfracslice} below on $D+P$ is only valid inside the square $p,q\ge 3$, just as before.  We deal with this case first and investigate the case $p<3$ separately:
\begin{equation}
\label{Cfracslice}
\frac{3-d}{p} + \frac{\a}{q}\le \frac{3-d}{2},\; p\ge q \geq 2;\quad 
\frac{3-d}{p} + \frac{\a}{q}< \frac{3-d}{2},\; 2\leq p< q
\end{equation}
\begin{equation}
\label{DPfracslice}
\frac{3-d}{p} + \frac{\a}{q}\le \frac{2 + \a - d}{3},\; p\ge q \geq 3;\quad 
\frac{3-d}{p} + \frac{\a}{q}< \frac{2 + \a - d}{3},\; 3\leq p< q
\end{equation}
\begin{subequations} \label{FG}
\begin{align}
\label{FGfracslicetop}
\frac{(3-d)\g}{p} + \frac{\a}{q} & \le \frac{(3-d)\g + \a - 2\g}{2}, \quad \frac1q - \frac{\g}{p} \ge \frac{1-\g}{2}, \;\; p,q\ge 2 \\
\label{FGfracslicebottom}
\frac{(3-d)\g}{p} + \frac{\a}{q} & < \frac{(3-d)\g + \a - 2\g}{2}, \quad \frac1q - \frac{\g}{p} < \frac{1-\g}{2}, \;\; p,q\ge 2.
\end{align}
\end{subequations}

The line $\frac1q - \frac{\g}{p} = \frac{1-\g}{2}$ joins $L^{\frac{2}{1-\g}} L^\infty$ with $L^2 L^2$.  It plays the role for $F+G$ that the bisectrice plays for $C$ and $D+P$. Also note that for each restriction, all inequalities are nonstrict in the special case $d=0$, just as before.  

When $d\le 5-4\g$, we find using the same argument as in the classical case that $\a=\frac{5-d}{2}$ gives the optimal region.  At this value of $\a$, $\eqref{Cfracslice}$ and $\eqref{DPfracslice}$ coincide, and \eqref{FGfracslicetop} and \eqref{FGfracslicebottom} are less restrictive than \eqref{Cfracslice} and \eqref{DPfracslice}.  Furthermore, since the line corresponding to \eqref{Cfracslice} rotates about $L^\infty L^2$, we may use interpolation to remove the restriction $q\ge 3$.  

In the case $p<3$, we repeat the argument used for the classical NSE and make changes where necessary. Assume first that $\g\ge \frac34$. Then we have 
\begin{equation}
\label{frac_D}
|D|+|P|\le \|u\|_{L^2 H^1}^{3\b} \|u\|_{L^q L^p}^{3(1-\b)}\|\n \phi\|_{L^\s L^\infty}, 
\end{equation}
where 
\begin{equation}
\frac13 = \frac{(3-2\g)\b}{6} + \frac{1-\b}{p}\implies 
\b = \frac{6-2p}{6-(3-2\g)p};\;
1-\b = \frac{(2\g-1)p}{6-(3-2\g)p}
\end{equation}
\begin{equation}
\frac1\s = 1 - \frac{3\b}{2} - \frac{3(1-\b)}{q} = \frac{2\g pq - 3p(2\g-1) - 3q}{(6-(3-2\g)p)q}.
\end{equation}
Now 
\[
\|\n \phi\|_{L^\s L^\infty}^\s = \int \sup_i r_i^{-\s} \chi_{I_i}(t)\,dt \le \int \sup_j 2^{j\s} \chi_{J_j}(t)\,dt \lesssim \sum_j (2^{\a-\s})^{-j},
\]
and the sum on the right is bounded whenever $\s < \a$.  Substituting in for $\s$ and simplifying, we obtain
\begin{equation}
\label{frac_Drest_pl3}
\frac{2 + \a}{p} + \frac{(2\g-1)\a}{q} < \frac{3-2\g + 2\a\g}{3}.
\end{equation}

Note that as $\a$ increases, the line corresponding to equality rotates counterclockwise about $L^2 L^{\frac{6}{3-2\g}}$.  Combining \eqref{Cfracslice} and \eqref{frac_Drest_pl3} with inequality replaced by equality in both cases, we find the curve
\begin{equation}\label{frac_xy_curve}
\begin{split}
6(3-d)x^2 + 6(6\g - 5 - (& 2\g - 1)d)xy - (4\g +3)(3-d)x \\&  - (22\g - 15 - (2\g-1)3d)y + 2\g(3-d) = 0,
\end{split}
\end{equation}
where $x = p^{-1}$ and $y = q^{-1}$.  Notice that the curve contains both $L^2 L^{\frac{6}{3-2\g}}$ and $L^\infty L^2$, as we expect it to. (Indeed, these points are the two axes of rotation for our lines.)  However, since we are restricted to the case when $p<3$, the part of the curve that we can use is limited to that connecting $L^{\frac{15 - 3d}{3-d}}L^3$ and $L^\infty L^2$.  

If $\g<\frac34$, then \eqref{frac_D} is not valid for all values of $p,q$.  In particular, we need $3\b \le 2$ for the obvious application of H\"older to be valid, which translates to $p\ge \frac{3}{2\g}$.  When $\g\in (\frac12, \frac34)$, we also see that the two places where the curve \eqref{frac_xy_curve} crosses the $x$-axis are at $x=\frac12$ and $x= \frac{2\g}{3}$.  When $\g\in (\frac12, \frac34)$, we have $\frac{2\g}{3}\in (\frac13, \frac12)$.  So the curve still gives us a meaningful restriction up to the point where it crosses the $x$-axis for the first time.  Once $\g<\frac12$, however, we have $\frac{2\g}{3}<\frac13$, so the use of enstrophy does not allow us to make any statement about the range $p<3$.  

All in all, our criteria for energy equality in the case $\frac12 < \g < 1$, $0\le d\le 5-4\g$ can be stated as 
\begin{equation}
\frac{2(3-d)}{p} + \frac{5-d}{q}\le 3-d,\; p\ge q;\quad 
\frac{2(3-d)}{p} + \frac{5-d}{q}< 3-d,\; 3\leq p< q
\end{equation}
\begin{equation}
\begin{split}
6(3-d)x^2 + 6(& 6\g - 5 - ( 2\g - 1)d)xy - (4\g +3)(3-d)x \\&  - (22\g - 15 - (2\g-1)3d)y + 2\g(3-d) > 0,
\quad \frac13 < x< \min\{ \frac12, \frac{2\g}{3}\}.
\end{split}
\end{equation}
Once again, strict inequalities are replaced by nonstrict ones if $d=0$.

\begin{figure}
\begin{minipage}{0.45\linewidth}
\includegraphics{./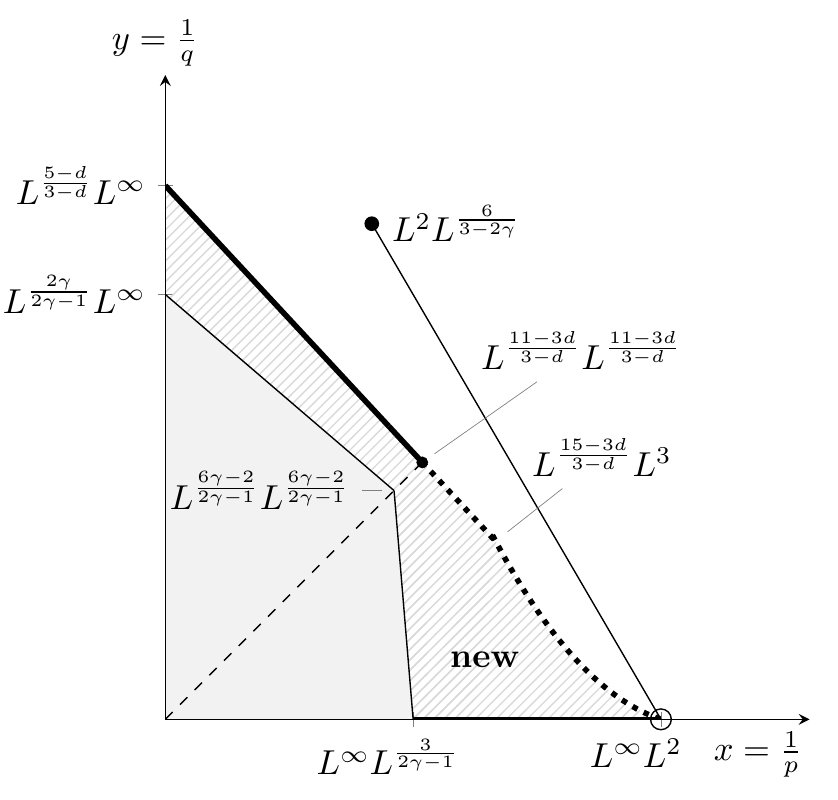}
\vspace{-8 mm}
\caption{$\frac34\le\g<1$, $0<d\leq 5-4\g$.}
\label{fig:frac_78}
\end{minipage}
\begin{minipage}{0.45\linewidth}
\includegraphics{./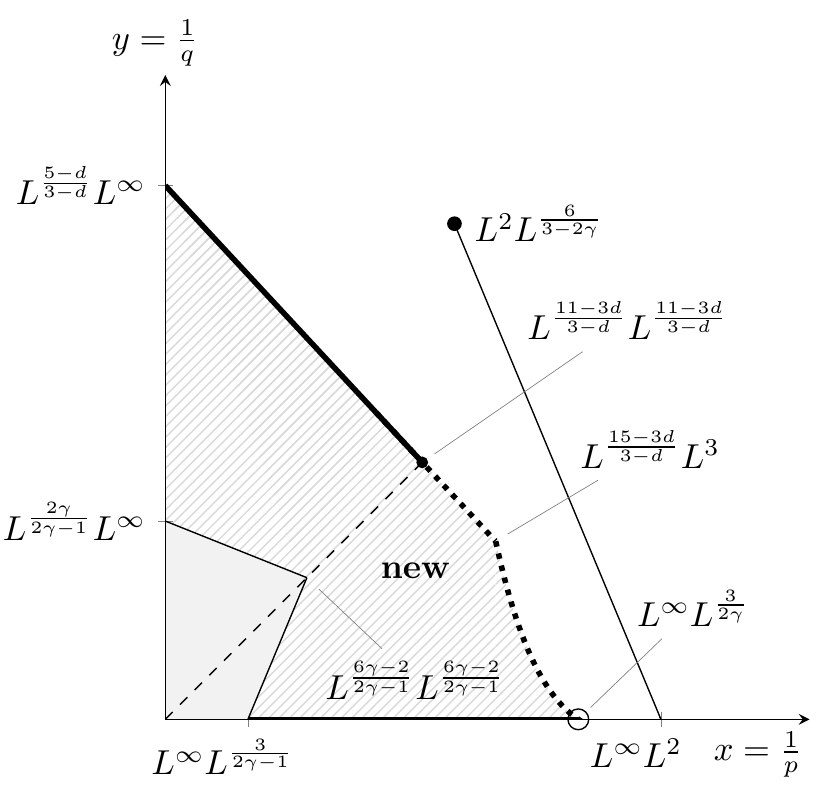}
\vspace{-8 mm}
\caption{$\frac12<\g<\frac34$, $0<d\leq 5-4\g$.}
\label{fig:frac_58}
\end{minipage}
\end{figure}
Figures \ref{fig:frac_78} and \ref{fig:frac_58} diagram  our results for a fixed value of $d\in (0, 5-4\g)$ (we use $d=\frac23$) and varying $\g\in (\frac12, 1)$.  Note that $L^{\frac{6\g-2}{2\g-1}} L^{\frac{6\g-2}{2\g-1}}$ serves as the analogue of the Lions space in the present context, because interpolation between this space and $L^2 H^\g$ lands in the Onsager space $L^3 B^{1/3}_{3,c_0}$.

As we take $d\to 5-4\g$ from below, the new region above the bisectrice collapses to the segment $[L^\infty L^{\frac{2\g}{2\g-1}}, L^{\frac{6\g-2}{2\g-1}}L^{\frac{6\g-2}{2\g-1}}]$. In this respect, the value $d=5-4\g$ serves a similar role to the value $d=1$ in the classical case.  Things are slightly more complicated when $5-4\g<d<3$. In this case, setting $\a$ equal to its usually optimal value of $\a = \frac{5-d}{2}$ places too heavy a burden on $F+G$; for a fixed $p$, we must increase $\a$ to optimize until the restrictions on $C$ and $F+G$ coincide.  An elementary computation gives the optimal value of $\a$ to be
\begin{equation}
\label{eq:optalphafracdlarge}
\a_{CF}(x) = (3-d)(1-\g)(1-2x) + 2\g \quad \quad (x = p^{-1}).
\end{equation}
We see then that as $x$ increases, the optimal value of $\a$ decreases.  When $p\ge 3$, the restriction on $D+P$ is always less stringent for this value of $\a$ than the corresponding restriction for $C$.  However, as $x$ increases beyond $\frac13$, $\a_{CF}(x)$ eventually becomes sufficiently small so that \eqref{frac_Drest_pl3} becomes limiting once again.  At this point, the optimal restriction is once again determined by the intersection of the $C$ and $D+P$ lines, following the curve \eqref{frac_xy_curve}.  Indeed, along the curve \eqref{frac_xy_curve}, $\a$ is given by 
\begin{equation}
\a_{CDP}(x) = \frac{3[(3-d)(2\g - 1) - 2](1-2x) + 4\g}{2(2\g - 3x)}.
\end{equation}
Now
\[
\a_{CDP}(1/3) = \frac{5-d}{2} < 2\g < \a_{CF}(1/3),
\]
whereas 
\[
\a_{CDP}(1/2) = \frac{4\g}{4\g-3}>2\g = \a_{CF}(1/2) \quad (3/4<\g<1),
\]
\[
\lim_{x\to \frac{2\g}{3}^-}\a_{CDP}(x) = \infty > \a_{CF}(2\g/3) \quad (1/2<\g\le 3/4).
\]
So there must be some $x_0\in (\frac13, \min\{\frac12, \frac{2\g}{3}\})$, where $\a_{CDP}(x_0) = \a_{CF}(x_0)$.  The actual value of $x_0$ does not seem to take a particularly enlightening form in general, but it can be easily calculated given $\g\in (\frac12, 1)$ and $d\in (5-4\g, 3)$.  See Figures \ref{fig:frac_78_dlarge} and \ref{fig:frac_58_dlarge}. 

\begin{figure}
\begin{minipage}{0.40\linewidth}
\includegraphics{./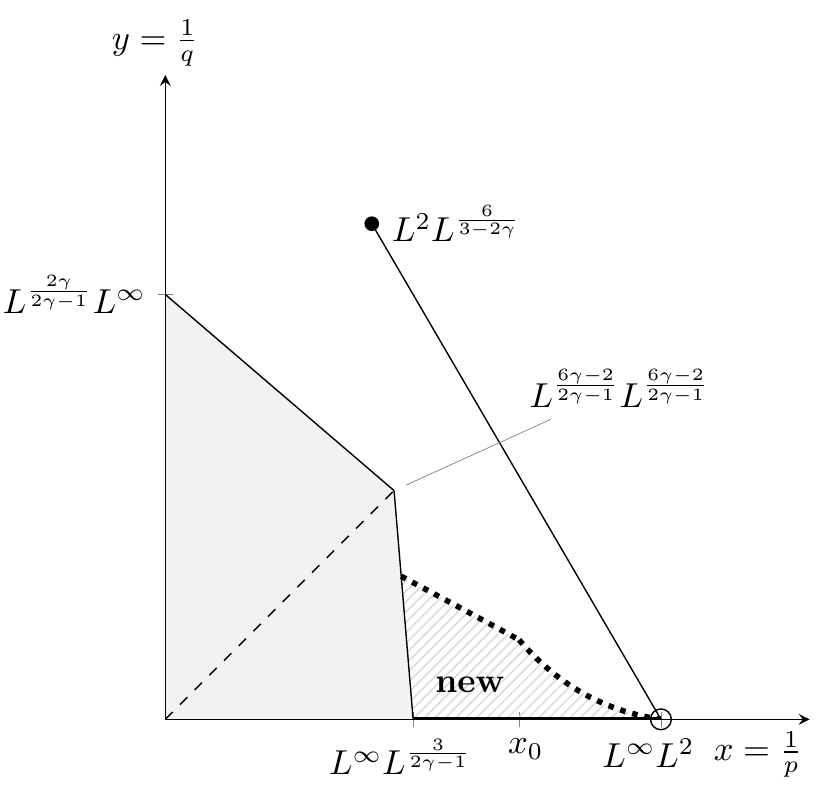}
\vspace{-8 mm}
\caption{$\frac34\le \g<1$, $\;5-4\g<d<3$.}
\label{fig:frac_78_dlarge}
\end{minipage}
\hspace{8 mm}
\begin{minipage}{0.4\linewidth}
\includegraphics{./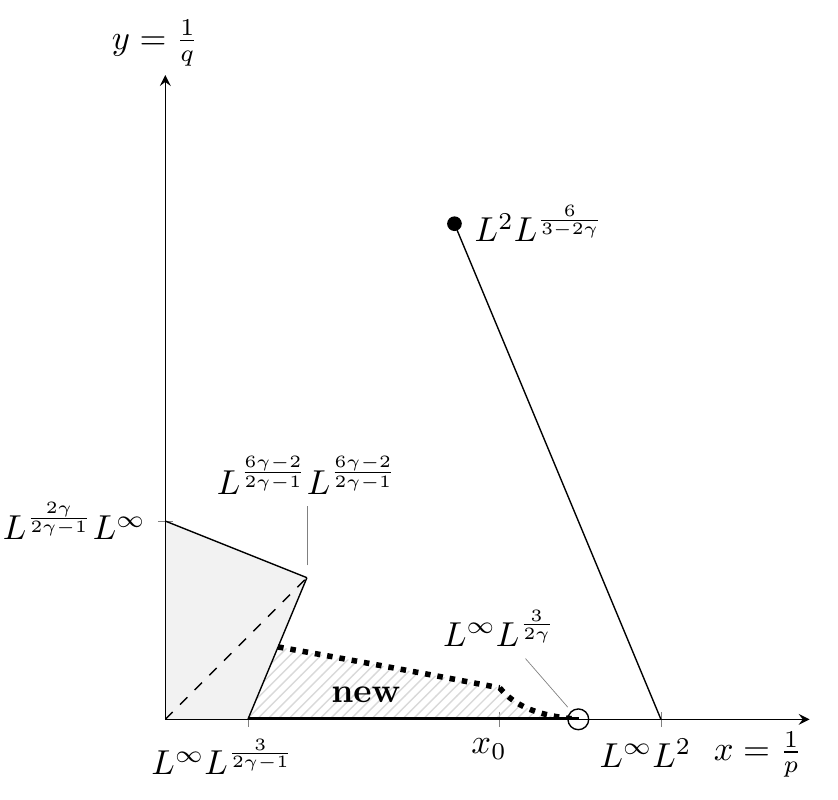}
\vspace{-8 mm}
\caption{$\frac12<\g<\frac34$, $\;5-4\g<d<3$.}
\label{fig:frac_58_dlarge}
\end{minipage}
\end{figure}

Altogether, the criteria for energy equality in the case $\g\in (\frac12, 1)$ and $d\in (5-4\g, 3)$ can be stated as 
\begin{equation}
4(1-\g)(3-d)xy - 2(3-d + (d-1)\g)y + (1-2x)(3-d) > 0, \quad x<x_0
\end{equation}
\begin{equation}
\begin{split}
6(3-d)x^2 + 6(& 6\g - 5 - ( 2\g - 1)d)xy - (4\g +3)(3-d)x \\&  - (22\g - 15 - (2\g-1)3d)y + 2\g(3-d) > 0,
\quad x_0 < x< \min\{ \frac12, \frac{2\g}{3}\}.
\end{split}
\end{equation}

\subsection{One-time singularity  case, $0<\g\le\frac12$} Much of the analysis of the previous subsection carries over to the case when $\g\in (0,\frac12]$.  However, there are a few important differences.  For one thing, the Lions region is the single point $L^\infty L^\infty$ when $\g=\frac12$ and trivial otherwise.  Second, the case $d>5-4\g$ is geometrically impossible since $5-4\g>3$ here.  Finally, we cannot say anything about the region $p<3$.  As was mentioned earlier, the enstrophy argument used to deal with this region for larger values of $\g$ does not apply when $\g\in (0,\frac12)$. In fact, we cannot even get any new information by interpolation with the Leray--Hopf line since the point $L^2 L^{\frac{6}{3-2\g}}$ lies on the line $x=\frac13$ when $\g = \frac12$ and to the right of this line when $\g<\frac12$.  So the region for which we have proved energy equality is independent of $\g$ for $\g<\frac12$; the region depends only on $d$.  See Figure \ref{fig:frac_14}.   \color{black}

\begin{figure}
%\begin{minipage}{0.45\linewidth}
%\includegraphics{./hd_frac_12_v3.pdf}
%\vspace{-8 mm}
%\caption{$\g=\frac12$}
%\label{fig:frac_12}
%\end{minipage}
%\begin{minipage}{0.45\linewidth}
\includegraphics{./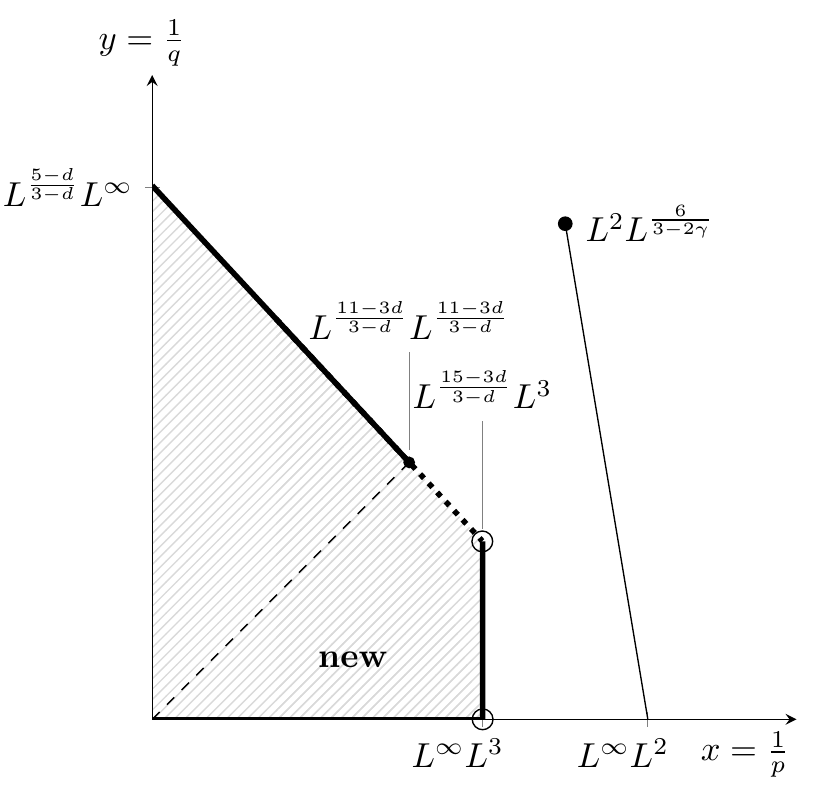}
%\vspace{-8 mm}
\caption{$0<\g\leq \frac12$, $d<3$.}
\label{fig:frac_14}
%\end{minipage}
\end{figure}

\subsection{General singularities}
We fix $\a = 2\g$ in consideration of the natural scaling.  The restrictions corresponding to $C$, $D+P$, and $F+G$ become

\begin{equation}
\label{Cfrac}
\frac{3-d}{p} + \frac{2\g}{q}\le \frac{3-d}{2},\; p\ge q \geq 2;\quad 
\frac{3}{p} + \frac{2\g - d}{q}\le \frac{3-d}{2},\; 2\leq p< q
\end{equation}
\begin{equation}
\label{DPfrac}
\frac{3-d}{p} + \frac{2\g}{q}\le \frac{2 + 2\g - d}{3},\; p\ge q \geq 3;\quad 
\frac{3}{p} + \frac{2\g - d}{q}\le \frac{2 + 2\g - d}{3},\; 3\leq p< q
\end{equation}
\begin{subequations}
\begin{align}
\label{FGfractop}
\frac{3-d}{p} + \frac{2}{q} & \le \frac{3-d}{2}, \quad \frac1q - \frac{\g}{p} \ge \frac{1-\g}{2}, \;\; p,q\ge 2 \\
\label{FGfracbottom}
\frac{3\g}{p} + \frac{2\g-d}{q} & \le \frac{3\g-d}{2}, \quad \frac1q - \frac{\g}{p} < \frac{1-\g}{2}, \;\; p,q\ge 2.
\end{align}
\end{subequations}

We will not present figures pertaining to this particular situation, as the reader can easily verify conditions above for any particular values of $\g,d,p,q$. However, we make several comments. 

First, we note that the measure of $I$ may not vanish for certain combinations of $\g, d$.  Mimicking the argument of \eqref{e:Ito0} only gives $|I|\to 0$ when $d\le 2\g$.  If $d>2\g$, then we continue with the additional assumption that $\cH_d(S)$ is actually zero (rather than merely finite, as we usually assume).

Assume first that $\g\in (\frac12,1)$. Then \eqref{DPfrac} is more stringent than \eqref{Cfrac} when $d<5-4\g$; the two inequalities coincide when $d=5-4\g$.  At this value of $d$, the region satisfying \eqref{Cfrac}, \eqref{DPfrac} is exactly the region already covered by the analogue of the Lions result.  So only the case $d<5-4\g$ can give new information.  However, in contrast to the classical case, the restrictions \eqref{FGfractop}, \eqref{FGfracbottom} are not always superfluous.  If $\g< \frac12$, then the Lions region is  trivial, and consequently the value $d = 5 - 4\g$ has no special significance for our argument in the case of a general $2\g$-parabolic $d$-dimensional singularity with $\g\in (0,\frac12)$. 

When $d=0$, the singularity set can be covered by finitely many time-slices, and the region covered is the same as in the one-slice case.  When $d\in (0, 2\g - 1)$, the $DP$-lines are limiting, but there is still a nontrivial region covered in the range $p<3$ by interpolation.  This region disappears when $d = 2\g - 1$, but the $DP$-lines remain the limiting restriction until $d$ surpasses the value $\frac12(5+\g - \sqrt{ 9\g^2 - 18\g + 25})$, at which point the lower $FG$-line (corresponding to \eqref{FGfracbottom}) cuts into both the upper and the lower $DP$-lines.  This situation prevails until $d$ reaches the value $\frac{5\g-4\g^2}{3-2\g}$, at which point the lower $FG$-line becomes more stringent than the lower $DP$-line everywhere below the bisectrice.  However, at this point, the upper $FG$-line is still less stringent than the upper $DP$-line; this changes once $d$ surpasses $1$. Note that the point $L^{\frac{5-d}{3-d}}L^\infty$ is no longer included in the region covered for $d>1$.  Rather, the upper $FG$-line lies strictly below the interpolation line obtained in the region $q<3$ from the uppermost point on the $DP$ segment.  When $d$ lies in the range $d\in  [1,  2\g + 1 - 2\sqrt{3\g^2 - 3\g + 1})$, the upper $DP$-line remains more stringent than the $FG$-lines on a small segment.  However, once $d\ge 2\g + 1 - 2\sqrt{3\g^2 - 3\g + 1}$, the $FG$ restrictions are limiting in all cases.

There are a few larger values of significance for $d$, but they involve the interaction between the $FG$-lines and the Lions region rather than the $FG$-lines and the other restrictions imposed by our method.  We describe briefly the bifurcations of the diagrams.  When $d$ reaches the value $2-\g$, the Lions point $L^{\frac{6\g - 2}{2\g - 1}}L^{\frac{6\g - 2}{2\g - 1}}$ lies on the lower $FG$ segment.  When $d= \g(5-4\g)$, the new region below the bisectrice disappears entirely (since $\frac{3\g - d}{6\g} = \frac{2\g - 1}{3}$ for this value of $d$).  The new region disappears entirely into the Lions region once $d=\frac{2-\g}{\g}$.  Indeed, at this value of $d$, we have $\frac{3-d}{4} = \frac{2\g-1}{2\g}$; furthermore, both the upper $FG$-line and the line containing the upper part of the boundary for the Lions region pass through $L^\infty L^2$.  Therefore, the upper $FG$-line collapses to (a portion of) the boundary of the Lions region when $d = \frac{2-\g}{\g}$.  

\begin{remark} Finally, we make a remark about the case $\g>1$. The main technical reason why this case eludes our analysis is a failure to produce a proper cutoff function $\f$ for which $\L_\g \f$ would remain under control, as $\n_x \f$ would already develop jump discontinuities. However if $d=0$, i.e., a finite-point set $S$, one can construct each $\f$ from $\f_i$'s having disjoint support, allowing the analysis to be carried out. In this case, the region of conditions is the same as what is shown in Figure~\ref{fig:dzero}, except that the equation for the hyperbola connecting $L^5 L^3$ to the energy space is now given by \eqref{frac_xy_curve} with $d=0$:
\begin{equation}
18 x^2 + (6\g - 5)6xy - (12\g +9) x - (22\g - 15)y + 6\g = 0, \quad \frac13 \le x \le \frac12.
\end{equation}
\end{remark}

%
%We provide some figures for the case when $\g=\frac58$ and $d$ varies to show some typical regions covered by our method.  \color{red} Add figures! \color{black}

%\bibliographystyle{plain}
%\bibliography{hdimsing}

\begin{thebibliography}{10}
	
	\bibitem{CKN}
	L.~Caffarelli, R.~Kohn, and L.~Nirenberg.
	\newblock Partial regularity of suitable weak solutions of the
	{N}avier-{S}tokes equations.
	\newblock {\em Comm. Pure Appl. Math.}, 35(6):771--831, 1982.
	
	\bibitem{Cannone}
	Marco Cannone.
	\newblock {\em Ondelettes, paraproduits et {N}avier-{S}tokes}.
	\newblock Diderot Editeur, Paris, 1995.
	\newblock With a preface by Yves Meyer.
	
	\bibitem{CCFS}
	A.~Cheskidov, P.~Constantin, S.~Friedlander, and R.~Shvydkoy.
	\newblock Energy conservation and {O}nsager's conjecture for the {E}uler
	equations.
	\newblock {\em Nonlinearity}, 21(6):1233--1252, 2008.
	
	\bibitem{CFS}
	Alexey Cheskidov, Susan Friedlander, and Roman Shvydkoy.
	\newblock On the energy equality for weak solutions of the 3{D}
	{N}avier-{S}tokes equations.
	\newblock In {\em Advances in mathematical fluid mechanics}, pages 171--175.
	Springer, Berlin, 2010.
	
	\bibitem{CET}
	Peter Constantin, Weinan E, and Edriss~S. Titi.
	\newblock Onsager's conjecture on the energy conservation for solutions of
	{E}uler's equation.
	\newblock {\em Comm. Math. Phys.}, 165(1):207--209, 1994.
	
	\bibitem{DR}
	Jean Duchon and Raoul Robert.
	\newblock Inertial energy dissipation for weak solutions of incompressible
	{E}uler and {N}avier-{S}tokes equations.
	\newblock {\em Nonlinearity}, 13(1):249--255, 2000.
	
	\bibitem{Evans}
	Lawrence~C. Evans and Ronald~F. Gariepy.
	\newblock {\em Measure theory and fine properties of functions}.
	\newblock Textbooks in Mathematics. CRC Press, Boca Raton, FL, revised edition,
	2015.
	
	\bibitem{FT}
	Reinhard Farwig and Yasushi Taniuchi.
	\newblock On the energy equality of {N}avier-{S}tokes equations in general
	unbounded domains.
	\newblock {\em Arch. Math. (Basel)}, 95(5):447--456, 2010.
	
	\bibitem{Frisch}
	Uriel Frisch.
	\newblock {\em Turbulence}.
	\newblock Cambridge University Press, Cambridge, 1995.
	\newblock The legacy of A. N. Kolmogorov.
	
	\bibitem{Kukavica}
	Igor Kukavica.
	\newblock Role of the pressure for validity of the energy equality for
	solutions of the {N}avier-{S}tokes equation.
	\newblock {\em J. Dynam. Differential Equations}, 18(2):461--482, 2006.
	
	\bibitem{Ladyzhenskaya}
	O.~A. Lady{\v{z}}enskaja, V.~A. Solonnikov, and N.~N. Ural{\cprime}ceva.
	\newblock {\em Linear and quasilinear equations of parabolic type}.
	\newblock Translated from the Russian by S. Smith. Translations of Mathematical
	Monographs, Vol. 23. American Mathematical Society, Providence, R.I., 1968.
	
	\bibitem{Leray}
	Jean Leray.
	\newblock Sur le mouvement d'un liquide visqueux emplissant l'espace.
	\newblock {\em Acta Math.}, 63(1):193--248, 1934.
	
	\bibitem{Lions}
	J.~L. Lions.
	\newblock Sur la r\'egularit\'e et l'unicit\'e des solutions turbulentes des
	\'equations de {N}avier {S}tokes.
	\newblock {\em Rend. Sem. Mat. Univ. Padova}, 30:16--23, 1960.
	
	\bibitem{Onsager}
	L.~Onsager.
	\newblock Statistical hydrodynamics.
	\newblock {\em Nuovo Cimento (9)}, 6(Supplemento, 2(Convegno Internazionale di
	Meccanica Statistica)):279--287, 1949.
	
	\bibitem{SS-pressure}
	G.~Seregin and V.~{\v{S}}ver{\'a}k.
	\newblock Navier-{S}tokes equations with lower bounds on the pressure.
	\newblock {\em Arch. Ration. Mech. Anal.}, 163(1):65--86, 2002.
	
	\bibitem{Serrin}
	James Serrin.
	\newblock The initial value problem for the {N}avier-{S}tokes equations.
	\newblock In {\em Nonlinear {P}roblems ({P}roc. {S}ympos., {M}adison, {W}is.,
		1962)}, pages 69--98. Univ. of Wisconsin Press, Madison, Wis., 1963.
	
	\bibitem{Shinbrot}
	Marvin Shinbrot.
	\newblock The energy equation for the {N}avier-{S}tokes system.
	\newblock {\em SIAM J. Math. Anal.}, 5:948--954, 1974.
	
	\bibitem{ShvydkoyGeometric}
	R.~Shvydkoy.
	\newblock A geometric condition implying an energy equality for solutions of
	the 3{D} {N}avier-{S}tokes equation.
	\newblock {\em J. Dynam. Differential Equations}, 21(1):117--125, 2009.
	
	\bibitem{Shvydsing}
	Roman Shvydkoy.
	\newblock On the energy of inviscid singular flows.
	\newblock {\em J. Math. Anal. Appl.}, 349(2):583--595, 2009.
	
\end{thebibliography}

\def\cprime{$'$}

\end{document}